\documentclass[11pt,a4paper]{article}
\usepackage{amsmath,amsfonts,amsxtra,amssymb,amscd,amsthm}
\usepackage{indentfirst,anysize,graphicx}
\marginsize{3cm}{2cm}{1cm}{1cm}
\linespread{1.1}
\parskip 5pt

\newtheorem{theorem}{Theorem}[section]

\newtheorem{proposition}{Proposition}[section]
\newtheorem{lemma}{Lemma}[section]
\newtheorem{definition}{Definition}[section]
\newtheorem{remark}{Remark}[section]

\begin{document}

\title{On the Rotar central limit theorem for sums of a random number of independent random variables}
\author{Tran Loc Hung\footnote{Ho Chi Minh City, Vietnam. Email: tlhungvn@gmail.com}}
\maketitle

\begin{abstract}
The Rotar central limit theorem is a remarkable theorem in the non-classical version since it does not use the condition of  asymptotic infinitesimality for the independent individual summands, unlike the theorems named Lindeberg's and Lindeberg-Feller's in the classical version. The Rotar central limit theorem   generalizes the classical Lindeberg-Feller central limit theorem since the Rotar condition is weaker than Lindeberg's. 

The main aim of this paper is to introduce the Rotar central limit theorem for sums of a random number of independent (not necessarily identically distributed) random variables and the conditions for its validity. The order of approximation in this theorem is also considered in this paper.
\end{abstract}

\vskip0.5cm
\noindent {\bf Key words and phrases}: \quad  Lindeberg condition; Lindeberg-Feller central limit theorem; Rotar's condition;  Rotar's central limit theorem; Random sums;  Characteristic function; Lindeberg principle; Order of approximation.\\
\noindent {\bf 2020 Mathematics Subject Classification}: \quad 41A25, 60E10, 60F05, 60G50.

\section{Introduction and preliminary information}\label{sec:1}
 Let $(X_{j}, j\geq 1)$ be a sequence of independent (not necessarily identically distributed) random variables defined on a probability space $(\Omega, \mathcal{A}, \mathbb{P}),$ and suppose that $ \mathbb{E}X_{j}=0, \quad \mathbb{D}X_{j}=\sigma^{2}_{j}\in (0,+\infty)\quad\text{for}\quad j=1, 2, \cdots .$  Let us put
\[
S_{n}=\sum\limits_{j=1}^{n}X_{j}\quad\text{and}\quad B^{2}_{n}=\mathbb{D}S_{n}=\sum\limits_{j=1}^{n}\sigma^{2}_{j}.
\] 
From now on, we suggest that, for $n\geq 1, \quad 0 < B_{n}=\sqrt{\mathbb{D}S_{n}}$ and $B_{n}\longrightarrow\infty$ when $n\to\infty.$\\
It is clear that, for $n\geq 1,$ the desired  sums  $\bigg(\frac{S_{n}-\mathbb{E}S_{n}}{\sqrt{\mathbb{D}S_{n}}} \bigg)$ have the following moments:
\[
\mathbb{E}\bigg(\frac{S_{n}-\mathbb{E}S_{n}}{\sqrt{\mathbb{D}S_{n}}} \bigg)=\mathbb{E}\bigg(\frac{S_{n}}{B_{n}} \bigg)=0\quad\text{and}\quad \mathbb{D}\bigg(\frac{S_{n}-\mathbb{E}S_{n}}{\sqrt{\mathbb{D}S_{n}}} \bigg)=\mathbb{D}\bigg(\frac{S_{n}}{B_{n}} \bigg)=1.
\]
This allows us to think about the closeness of the distribution of the normalized sum $\frac{S_{n}}{B_{n}}$ to the standard normal distribution function $\Phi_{0,1}(x)$ when $n\to\infty$ in the sense of the central limit theorem (CLT).

Write
\[
\Delta_{n}:=\sup\limits_{x\in\mathbb{R}}\bigg|P\bigg(\frac{S_{n}}{B_{n}}<x\bigg)-\Phi_{0,1}(x) \bigg|.
\] 
\begin{definition}\label{def:1.1}
A sequence $(X_{j}, j\geq 1)$ is said to satisfy the CLT, if 
 \begin{equation}\label{equ:1.1}
\lim\limits_{n\to\infty}\Delta_{n}=0 
 \end{equation}
holds.
 \end{definition}
 In problems of checking the validity of the CLTs in (\ref{equ:1.1}),  the following conditions play an important role:
\begin{enumerate}
\item  The Lyapunov condition:   for any $\delta\in (0, 1]$ 
\begin{equation}\label{equ:1.2}
 \lim\limits_{n\to\infty}\frac{1}{B^{2+\delta}_{n}}\sum\limits_{j=1}^{n}\int\limits_{-\infty}^{+\infty}|x|^{2+\delta}dF_{j}(x)=0.
\end{equation}
\item The Lindeberg condition: for each $\epsilon >0,$
\begin{equation}\label{equ:1.3}
\lim\limits_{n\to\infty}\frac{1}{B^{2}_{n}}\sum\limits_{j=1}^{n}\int\limits_{|x|>\epsilon B_{n}}x^{2}dF_{j}(x)=0.
\end{equation}
\item The Feller condition: 
\begin{equation}\label{equ:1.4}
\lim\limits_{n\to\infty}\frac{1}{B^{2}_{n}}\max\limits_{1\leq j\leq n}\sigma^{2}_{j}=0.
\end{equation}
\end{enumerate}
 It is easy to check the following implication:
 \begin{proposition}\label{pro:1.1}\quad For every $\epsilon>0$ and for $ \delta\in (0, 1]$ 
 \[
 (\ref{equ:1.2}) \Longrightarrow (\ref{equ:1.3}) \Longrightarrow (\ref{equ:1.4}).
 \]
 
\end{proposition}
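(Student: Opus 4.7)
\bigskip

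\noindent\textbf{Proof plan for Proposition \ref{pro:1.1}.}

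The plan is to prove each implication by a direct, one-line integral estimate; both are purely analytic and require no probabilistic machinery beyond the definitions of the three conditions.

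First, for the implication $(\ref{equ:1.2}) \Longrightarrow (\ref{equ:1.3})$, I would fix $\epsilon>0$ and $\delta\in(0,1]$, and exploit the fact that on the tail region $\{|x|>\epsilon B_{n}\}$ one has the pointwise bound
\[
x^{2}=|x|^{2+\delta}\cdot|x|^{-\delta}\leq \frac{|x|^{2+\delta}}{(\epsilon B_{n})^{\delta}}.
\]
Inserting this inside the Lindeberg integrand, summing over $j=1,\dots,n$, and dividing by $B_{n}^{2}$ yields
\[
\frac{1}{B_{n}^{2}}\sum_{j=1}^{n}\int_{|x|>\epsilon B_{n}}x^{2}\,dF_{j}(x)\leq \frac{1}{\epsilon^{\delta}}\cdot\frac{1}{B_{n}^{2+\delta}}\sum_{j=1}^{n}\int_{-\infty}^{+\infty}|x|^{2+\delta}\,dF_{j}(x),
\]
and the right-hand side tends to $0$ by the Lyapunov hypothesis (\ref{equ:1.2}).

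Next, for the implication $(\ref{equ:1.3})\Longrightarrow(\ref{equ:1.4})$, I would split the variance of each $X_{j}$ at the threshold $\epsilon B_{n}$:
\[
\sigma_{j}^{2}=\int_{|x|\leq\epsilon B_{n}}x^{2}\,dF_{j}(x)+\int_{|x|>\epsilon B_{n}}x^{2}\,dF_{j}(x)\leq \epsilon^{2}B_{n}^{2}+\int_{|x|>\epsilon B_{n}}x^{2}\,dF_{j}(x).
\]
Dividing by $B_{n}^{2}$, bounding the individual tail integral by the full sum over $k$, and then taking the maximum over $1\leq j\leq n$ gives
\[
\frac{1}{B_{n}^{2}}\max_{1\leq j\leq n}\sigma_{j}^{2}\leq \epsilon^{2}+\frac{1}{B_{n}^{2}}\sum_{k=1}^{n}\int_{|x|>\epsilon B_{n}}x^{2}\,dF_{k}(x).
\]
By the Lindeberg hypothesis (\ref{equ:1.3}), the second term vanishes as $n\to\infty$, so $\limsup_{n\to\infty}B_{n}^{-2}\max_{j}\sigma_{j}^{2}\leq \epsilon^{2}$; since $\epsilon>0$ is arbitrary, the limit is $0$.

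There is no genuine obstacle here: the only subtlety worth being careful about is the order of quantifiers in the Lindeberg-to-Feller step — one must first choose $\epsilon$, pass to the limit in $n$, and only afterwards let $\epsilon\downarrow 0$, since the truncation threshold appears on both sides of the estimate.
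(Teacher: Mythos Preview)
Your argument is correct and is exactly the standard textbook one. The paper itself does not give a proof of Proposition~\ref{pro:1.1}; it simply prefaces the statement with ``It is easy to check the following implication'' and moves on. For the second implication, the paper later uses precisely your variance-splitting estimate
\[
\frac{1}{B_{n}^{2}}\max_{1\leq j\leq n}\sigma_{j}^{2}\leq \epsilon^{2}+\frac{1}{B_{n}^{2}}\sum_{j=1}^{n}\int_{|x|>\epsilon B_{n}}x^{2}\,dF_{j}(x)
\]
in the proof of the random analogue (Proposition~\ref{pro:2.3}), so your approach is fully in line with the paper's.
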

The following theorem states that (\ref{equ:1.3}) is a sufficient condition for the validity of  CLT in (\ref{equ:1.1}). The detailed proofs  can be found in \cite{Shiryaev1996} (Chapter III, Theorem 1 on  page 329).
\begin{theorem}\label{the:1.1} (Lindeberg theorem)\quad For every $\epsilon>0$ 
\[
(\ref{equ:1.3})\Longrightarrow  (\ref{equ:1.1}).
\]
\end{theorem}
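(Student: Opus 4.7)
The plan is to establish pointwise convergence of the characteristic function of $S_n/B_n$ to $e^{-t^2/2}$ and then invoke L\'evy's continuity theorem, together with P\'olya's theorem (since $\Phi_{0,1}$ is continuous), to upgrade pointwise convergence of characteristic functions into uniform convergence of distribution functions, which is exactly $\Delta_n \to 0$. Writing $\varphi_j(t) = \mathbb{E}e^{itX_j}$, so that the characteristic function of $S_n/B_n$ is $\prod_{j=1}^n \varphi_j(t/B_n)$, the task reduces to proving that this product tends to $e^{-t^2/2}$ for each fixed $t\in\mathbb{R}$.

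I would carry this out via the standard two-term Taylor estimate
\[
\bigl| e^{iu} - 1 - iu + \tfrac{u^2}{2} \bigr| \leq \min\bigl( \tfrac{|u|^3}{6},\, u^2 \bigr),
\]
applied with $u = tx/B_n$ and integrated against $dF_j(x)$. Using $\mathbb{E}X_j = 0$ and $\mathbb{E}X_j^2 = \sigma_j^2$, this yields, for any $\epsilon>0$,
\[
\Bigl| \varphi_j(t/B_n) - \Bigl(1 - \tfrac{t^2\sigma_j^2}{2B_n^2}\Bigr) \Bigr| \leq \frac{|t|^3}{6B_n^3}\int_{|x|\leq\epsilon B_n}|x|^3\,dF_j(x) + \frac{t^2}{B_n^2}\int_{|x|>\epsilon B_n}x^2\,dF_j(x).
\]
Summing over $j$, bounding $|x|^3 \leq \epsilon B_n x^2$ on the small set and using $\sum_j \sigma_j^2 = B_n^2$, the first contribution is at most $|t|^3\epsilon/6$, while the second tends to zero by the Lindeberg condition (\ref{equ:1.3}). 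Letting $n\to\infty$ and then $\epsilon\to 0$ drives the total sum of per-summand errors to zero.

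The last step is to pass from this per-summand estimate to the product. By Proposition \ref{pro:1.1}, the Lindeberg condition forces the Feller condition (\ref{equ:1.4}), so $\max_j \sigma_j^2/B_n^2 \to 0$; in particular $|1 - t^2\sigma_j^2/(2B_n^2)| \leq 1$ for $n$ large, and combined with $|\varphi_j(t/B_n)|\leq 1$ the telescoping bound $|\prod a_j - \prod b_j| \leq \sum |a_j - b_j|$ lifts the per-summand estimate to the whole product. Finally, the Feller condition lets me replace each factor $(1-t^2\sigma_j^2/(2B_n^2))$ by $\exp(-t^2\sigma_j^2/(2B_n^2))$ with total error controlled by $\sum_j (t^2\sigma_j^2/(2B_n^2))^2 \leq \tfrac{t^4}{4}\max_j(\sigma_j^2/B_n^2) \to 0$, and the product of the exponentials is exactly $e^{-t^2/2}$.

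The main obstacle is not a single hard step but the careful bookkeeping of two complementary regimes: the contribution of large values of each $X_j$ is governed by the Lindeberg condition, while the smallness of each individual variance $\sigma_j^2/B_n^2$ (the Feller condition, derived from Lindeberg) is what permits interchanging sums, products, and exponentials with negligible cost. This is the classical Lindeberg characteristic-function proof as presented in Shiryaev, and the Rotar generalisation pursued later in the paper will replace the Lindeberg condition by its weaker Rotar counterpart at the first step above.
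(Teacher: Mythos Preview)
Your argument is correct and is precisely the classical characteristic-function proof of Lindeberg's theorem. The paper does not actually prove Theorem~\ref{the:1.1} but simply refers the reader to Shiryaev \cite{Shiryaev1996} (Chapter~III, Theorem~1, p.~329), and the proof there is essentially the one you have sketched.
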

Using the Proposition \ref{pro:1.1}, the Lyapunov theorem confirms that the CLT is also valid if  the Lyapunov condition (\ref{equ:1.2}) holds.
\begin{theorem}\label{the:1.2} (Lyapunov theorem)\quad For $\delta\in (0, 1]$ 
\[
(\ref{equ:1.2})\Longrightarrow  (\ref{equ:1.1}).
\]
\end{theorem}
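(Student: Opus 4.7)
The plan is essentially transitive: exploit the implication chain already advertised in Proposition \ref{pro:1.1} together with the Lindeberg theorem (Theorem \ref{the:1.1}). That is, I would first observe that the Lyapunov condition (\ref{equ:1.2}) implies the Lindeberg condition (\ref{equ:1.3}), and then invoke Theorem \ref{the:1.1} to conclude that $\Delta_{n}\to 0$, which is precisely (\ref{equ:1.1}). Since Theorem \ref{the:1.1} is available as a black box, the only real content to supply is the implication (\ref{equ:1.2}) $\Longrightarrow$ (\ref{equ:1.3}) from Proposition \ref{pro:1.1}.

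For that implication, I would argue pointwise on the domain of integration. Fix $\epsilon>0$ and $\delta\in(0,1]$. On the set $\{|x|>\epsilon B_{n}\}$ we have $|x|^{\delta}>(\epsilon B_{n})^{\delta}$, and hence
\[
x^{2}=|x|^{2}=|x|^{2+\delta}\cdot|x|^{-\delta}<\frac{|x|^{2+\delta}}{(\epsilon B_{n})^{\delta}}.
\]
Summing over $j$ and dividing by $B_{n}^{2}$ yields
\[
\frac{1}{B_{n}^{2}}\sum_{j=1}^{n}\int_{|x|>\epsilon B_{n}}x^{2}\,dF_{j}(x)\;\leq\;\frac{1}{\epsilon^{\delta}}\cdot\frac{1}{B_{n}^{2+\delta}}\sum_{j=1}^{n}\int_{-\infty}^{+\infty}|x|^{2+\delta}\,dF_{j}(x).
\]
Letting $n\to\infty$ and using (\ref{equ:1.2}), the right-hand side tends to zero for every fixed $\epsilon>0$, which is exactly (\ref{equ:1.3}). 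Combining this with Theorem \ref{the:1.1} finishes the argument.

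I do not expect any real obstacle here: the whole point of stating Proposition \ref{pro:1.1} and Theorem \ref{the:1.1} earlier is to make Theorem \ref{the:1.2} a one-line corollary. The only mild care needed is the case distinction in bounding $x^{2}$ by $|x|^{2+\delta}(\epsilon B_{n})^{-\delta}$, which requires $\delta>0$ (so that $|x|^{-\delta}$ is controlled on $\{|x|>\epsilon B_{n}\}$); this is why the hypothesis $\delta\in(0,1]$ appears. The upper bound $\delta\leq 1$ plays no role in this implication, but it is consistent with the regime in which the Lyapunov moments are typically assumed to exist.
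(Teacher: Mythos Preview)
Your proposal is correct and matches the paper's approach exactly: the paper does not give a standalone proof of Theorem~\ref{the:1.2} but simply remarks that it follows by ``using the Proposition~\ref{pro:1.1}'' together with the Lindeberg theorem (Theorem~\ref{the:1.1}). You have supplied the standard verification of the implication $(\ref{equ:1.2})\Rightarrow(\ref{equ:1.3})$ that the paper leaves to the reader, and the rest is the same transitive argument.
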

 Next, the Lindeberg-Feller theorem improves the Lindeberg theorem  as  the following result.
 \begin{theorem}\label{the:1.3} (Lindeberg-Feller theorem)\quad For every $\epsilon>0$ 
\begin{enumerate}
\item[{i}.] The CLT is valid if  the Lindeberg condition (\ref{equ:1.3}) holds., i.e.,
\[
(\ref{equ:1.3})\Longrightarrow (\ref{equ:1.1}).
\]
\item[{ii}.] Under the Feller condition (\ref{equ:1.4}), the CLT follows the (\ref{equ:1.3}), that is,
\[
(\ref{equ:1.1}) \quad\&\quad (\ref{equ:1.4}) \Longrightarrow  (\ref{equ:1.3}).
\]
\end{enumerate}
\end{theorem}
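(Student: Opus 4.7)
Part (i) of the theorem coincides verbatim with Theorem \ref{the:1.1}, so no additional work is needed; the interesting content lies in the converse direction (ii). For that I plan to proceed through characteristic functions. Writing $f_{j}(t)=\mathbb{E}e^{itX_{j}}$ and $f_{nj}(t)=f_{j}(t/B_{n})$, Lévy's continuity theorem converts the CLT hypothesis (\ref{equ:1.1}) into the pointwise statement $\prod_{j=1}^{n}f_{nj}(t)\longrightarrow e^{-t^{2}/2}$ for every $t\in\mathbb{R}$. The goal is then to read off the Lindeberg condition (\ref{equ:1.3}) from this product convergence, using the Feller condition (\ref{equ:1.4}) as the bridge.

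The first key step is \emph{asymptotic negligibility}. Because $\mathbb{E}X_{j}=0$, the inequality $|e^{iu}-1-iu|\leq u^{2}/2$ gives
\[
|f_{nj}(t)-1|\leq \frac{t^{2}\sigma_{j}^{2}}{2B_{n}^{2}},\qquad \max_{1\leq j\leq n}|f_{nj}(t)-1|\leq \frac{t^{2}}{2B_{n}^{2}}\max_{1\leq j\leq n}\sigma_{j}^{2}\longrightarrow 0
\]
by (\ref{equ:1.4}). With the $f_{nj}$ uniformly close to $1$, the second step is the \emph{linearization} $\log\prod_{j}f_{nj}(t)=\sum_{j}(f_{nj}(t)-1)+o(1)$, since $\sum_{j}(f_{nj}-1)^{2}\leq \max_{j}|f_{nj}-1|\cdot\sum_{j}|f_{nj}-1|=o(1)\cdot O(t^{2})$. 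Consequently
\[
\sum_{j=1}^{n}\bigl(1-f_{nj}(t)\bigr)\longrightarrow \frac{t^{2}}{2},
\]
and taking real parts yields $\sum_{j=1}^{n}\int(1-\cos(tx/B_{n}))\,dF_{j}(x)\to t^{2}/2$.

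The third step extracts the Lindeberg functional $L_{n}(\epsilon):=B_{n}^{-2}\sum_{j}\int_{|x|>\epsilon B_{n}}x^{2}dF_{j}(x)$. Splitting each integral at $|x|=\epsilon B_{n}$ and using the universal bounds $1-\cos u\leq u^{2}/2$ on the inner piece and $1-\cos u\leq 2$ together with Chebyshev's inequality on the outer piece, one obtains an estimate of the form
\[
\sum_{j=1}^{n}\int\bigl(1-\cos(tx/B_{n})\bigr)dF_{j}(x)\;\leq\;\frac{t^{2}}{2}\bigl(1-L_{n}(\epsilon)\bigr)+\frac{2}{\epsilon^{2}}L_{n}(\epsilon).
\]
Combining this with the already-established limit $t^{2}/2$ on the left gives $L_{n}(\epsilon)\bigl(\tfrac{t^{2}}{2}-\tfrac{2}{\epsilon^{2}}\bigr)\leq o(1)$; choosing any $t$ with $t^{2}>4/\epsilon^{2}$ forces $L_{n}(\epsilon)\to 0$, which is precisely (\ref{equ:1.3}).

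The main obstacle is the linearization $\log\prod=\sum(f_{nj}-1)+o(1)$: it is exactly here that the Feller condition is indispensable, because without the uniform smallness of $|f_{nj}-1|$ one cannot control the branches of the complex logarithm nor the quadratic error $\sum(f_{nj}-1)^{2}$, and indeed the converse implication genuinely fails when (\ref{equ:1.4}) is dropped. All remaining steps are manipulations of real-valued inequalities for $1-\cos u$ and routine truncation arguments.
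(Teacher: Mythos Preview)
The paper does not actually supply a proof of Theorem~\ref{the:1.3}; the Lindeberg--Feller theorem is quoted as classical background and the reader is referred to standard texts such as \cite{Feller1971}, \cite{Petrov1995}, and \cite{Shiryaev1996} for full arguments. There is therefore nothing in the paper to compare your proposal against.

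That said, your outline for part~(ii) is the standard characteristic-function proof and is correct. The three steps---Feller $\Rightarrow$ uniform smallness of $|f_{nj}(t)-1|$, linearization $\log\prod f_{nj}=\sum(f_{nj}-1)+o(1)$, and extraction of $L_{n}(\epsilon)$ via the cosine inequalities $1-\cos u\leq u^{2}/2$ and $1-\cos u\leq 2$---are exactly the ingredients used in, e.g., Shiryaev's or Petrov's treatments. Your final inequality rearranges to
\[
L_{n}(\epsilon)\Bigl(\tfrac{t^{2}}{2}-\tfrac{2}{\epsilon^{2}}\Bigr)\;\leq\;\tfrac{t^{2}}{2}-\sum_{j=1}^{n}\int\bigl(1-\cos(tx/B_{n})\bigr)\,dF_{j}(x)\longrightarrow 0,
\]
and choosing $t>2/\epsilon$ forces $L_{n}(\epsilon)\to 0$, as you say. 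The identification of the linearization step as the precise place where Feller's condition is needed is also accurate. Nothing is missing; this would be accepted as a complete sketch in any of the cited references.
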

The references \cite{Feller1971}, \cite{Gnedenko1949}, \cite{Petrov1995}, \cite{Renyi1970}, \cite{Rotar1996}, and \cite{Shiryaev1996} are devoted to the study of the CLTs for sums of independent random variables and the Lindeberg condition (\ref{equ:1.3}) with its closed conditions. 

The well-known CLTs like Lyapunov's, Lindeberg's, or Lindeberg-Feller's for the sequence $(X_{j}, j\geq 1)$ are so-called  classical since they use  the following condition:
 \begin{equation}\label{equ:1.5}
 \lim\limits_{n\to\infty}P(\sup\limits_{1\leq j\leq n} |X_{j}|>\epsilon B_{n})=0,  
\end{equation}  
 for ever $\epsilon>0.$ The condition (\ref{equ:1.5}) is called condition of asymptotic infinitesimality for the independent individual summand $\frac{X_{j}}{B_{n}}$ in sums $\frac{S_{n}}{B_{n}}$ for $j=1, 2, \cdots$ and $n\geq 1.$ By Chebyshev's inequality (see for instance \cite{Petrov1995}, page 124 ), for every $\epsilon>0,$ it is clear to check that
 \[
 (\ref{equ:1.4})\Longrightarrow (\ref{equ:1.5}). 
 \]
Following Zolotarev \cite{Zolotarev1997}, the CLTs that do not use the condition (\ref{equ:1.5}) are called non-classical. An example of a CLT in the non-classical version is introduced by Shiryaev (\cite{Shiryaev1996}, page 337). This remarkable example shows that there are nondegenerate random variables such that neither the Lindeberg condition (\ref{equ:1.3}) and the Feller condition (\ref{equ:1.4})   nor the condition of asymptotic infinitesimality (\ref{equ:1.5}) are satisfied, but nevertheless the "non-classical" CLT is valid. 

The following result is a non-classical CLT that is modified from the Rotar theorem (see for instance  \cite{Shiryaev1996},  Theorem 1, page 338) as follows:
\begin{theorem}\label{the:1.4}\quad To have  (\ref{equ:1.1}), it is sufficient (and necessary) that for every $\epsilon>0$ the condition 
\begin{equation}\label{equ:1.6}
\lim\limits_{n\to\infty} \frac{1}{B_{n}}\sum\limits_{j=1}^{n}\int\limits_{|x|>\epsilon B_{n}}|x|\cdot |F_{j}(x)-\Phi_{j}(x)|dx=0,  
\end{equation}  
is satisfied, where $F_{j}(x)=P(X_{j}<x)$ and $\Phi_{j}(x)=\Phi_{0,1}(x/\sigma_{j}),$ for $j=1, 2, \cdots, n.$
\end{theorem}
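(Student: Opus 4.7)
The plan is the classical characteristic-function route to Rotar's theorem. For each fixed $t\in\mathbb{R}$, I compare the characteristic functions $\varphi_j^{(n)}(t) := \mathbb{E}\exp(itX_j/B_n)$ with their Gaussian analogues $\psi_j^{(n)}(t) := \exp(-t^2\sigma_j^2/(2B_n^2))$. Since $\sum_j\sigma_j^2 = B_n^2$, one has $\prod_j \psi_j^{(n)}(t) = e^{-t^2/2}$ identically, so L\'evy's continuity theorem (combined with the standard upgrade from weak convergence to uniform convergence when the limit distribution is continuous) says that $\Delta_n \to 0$ is equivalent to the pointwise convergence $\prod_j \varphi_j^{(n)}(t) \to e^{-t^2/2}$. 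Because $|\varphi_j^{(n)}|, |\psi_j^{(n)}| \leq 1$, the standard telescoping identity for products reduces the task to bounding $\sum_j |\varphi_j^{(n)}(t) - \psi_j^{(n)}(t)|$.

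The crux will be an integration-by-parts identity. Writing $\varphi_j^{(n)}(t) - \psi_j^{(n)}(t) = \int e^{itx/B_n}\, d(F_j - \Phi_j)(x)$ and using that $F_j - \Phi_j$ vanishes at $\pm\infty$, I obtain
\[
\varphi_j^{(n)}(t) - \psi_j^{(n)}(t) = -\frac{it}{B_n}\int_{-\infty}^{+\infty} e^{itx/B_n}(F_j(x) - \Phi_j(x))\, dx.
\]
Since $X_j$ and $N(0,\sigma_j^2)$ share their first and second moments, one has $\int(F_j - \Phi_j)\,dx = 0$ and $\int x(F_j - \Phi_j)\,dx = 0$, which allows me to subtract $1 + itx/B_n$ from the exponential without changing the integral. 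The resulting cubic-type remainder $e^{itx/B_n} - 1 - itx/B_n$ is precisely what will interact with the weight $|x|\cdot|F_j(x) - \Phi_j(x)|$ appearing in (\ref{equ:1.6}).

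Next I would split the $x$-integral at $|x| = \epsilon B_n$. On the inner region $|x|\leq\epsilon B_n$, the elementary estimate $|e^{iu} - 1 - iu| \leq u^2/2$, together with the a priori bound $\int |x|\,|F_j(x) - \Phi_j(x)|\,dx \leq 2\sigma_j^2$ (which follows immediately from $|F_j-\Phi_j|\leq (1-F_j)+(1-\Phi_j)$ on $x>0$ and its mirror on $x<0$, plus the relation $\int_0^\infty x(1-F_j)\,dx = \tfrac{1}{2}\int_0^\infty x^2\,dF_j$) and the identity $\sum_j \sigma_j^2 = B_n^2$, produces a total contribution of order $\epsilon|t|^3$. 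On the outer region $|x|>\epsilon B_n$, the linear bound $|e^{iu} - 1 - iu| \leq C(\epsilon,t)\,|u|$, valid because $|tx/B_n| > |t|\epsilon$ there, yields a contribution equal to a constant times $(1/B_n)$ times the Rotar quantity in (\ref{equ:1.6}), which tends to zero by hypothesis. Taking $\limsup_n$ and then letting $\epsilon \downarrow 0$ closes the sufficiency half.

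The main obstacle will be the necessity direction. Assuming (\ref{equ:1.1}), I must recover (\ref{equ:1.6}) for every $\epsilon > 0$. My plan is to argue by contradiction: if (\ref{equ:1.6}) failed along some subsequence at a particular $\epsilon_0 > 0$, I would reverse-engineer the characteristic-function estimate above, choosing $t$ at a scale comparable to $1/\epsilon_0$ so that the tail piece (which the sufficiency argument controls only from above) can also be detected from below. Making this quantitative is the delicate point, since the telescoped upper bound is lossy; one typically appeals to a smoothing inequality of Esseen type or to the Fourier-inversion representation of $\Delta_n$ in order to convert the supremum metric into an integral of $|\prod_j\varphi_j^{(n)}(t) - e^{-t^2/2}|$, thereby producing a lower bound matching the upper bound modulo constants.
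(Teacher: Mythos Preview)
The paper does not prove Theorem~\ref{the:1.4}; it is quoted from Shiryaev. The closest internal evidence is the proof of the random analogue, Theorem~\ref{the:3.1}. For sufficiency the paper simply invokes Shiryaev's inequality (his~(6), p.~339), namely
\[
\Bigl|\mathbb{E}e^{itS_n/B_n}-e^{-t^2/2}\Bigr|\le \epsilon|t|^3+2t^2\cdot\frac{1}{B_n}\sum_{j=1}^n\int_{|x|>\epsilon B_n}|x|\,|F_j(x)-\Phi_j(x)|\,dx,
\]
and this is exactly the estimate your integration-by-parts plus moment-matching plus split at $|x|=\epsilon B_n$ produces. So your sufficiency half is correct and coincides with the intended route.

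Your necessity half, however, is only a sketch of a hope rather than a mechanism. You propose to detect a lower bound on $|\prod_j\varphi_j^{(n)}(t)-e^{-t^2/2}|$ and then pass via Esseen smoothing or Fourier inversion to a lower bound on $\Delta_n$. Two points fail here. First, Esseen's smoothing inequality bounds $\Delta_n$ from \emph{above} by a characteristic-function integral, not from below; Fourier inversion gives a signed expression, so there is no automatic lower bound either. Second, even if $\Delta_n\to 0$ forces $\prod_j\varphi_j^{(n)}(t)\to e^{-t^2/2}$ (which it does), converting closeness of the \emph{product} into closeness of the \emph{sum} $\sum_j|\varphi_j^{(n)}(t)-\psi_j^{(n)}(t)|$---which is what you need to dominate the Rotar integral from below---is exactly where cancellations make the telescoping bound lossy in the direction you want. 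You acknowledge this but do not supply the missing idea.

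The paper's path to necessity is entirely different: it quotes a quantitative reverse inequality of Formanov (their Theorem~1, point~3),
\[
\frac{1}{B_n}\sum_{j=1}^n\int_{|x|>\epsilon B_n}|x|\,|F_j(x)-\Phi_j(x)|\,dx\ \le\ C(\epsilon)\Bigl(\Delta_n+\frac{1}{B_n^4}\sum_{j=1}^n\sigma_j^4\Bigr),
\]
which immediately yields necessity once one knows $B_n^{-4}\sum_j\sigma_j^4\to 0$. In the paper's random Theorem~\ref{the:3.1} this last term is disposed of via the Feller condition; the unconditional necessity asserted in Theorem~\ref{the:1.4} therefore rests on outside work (Rotar, Formanov) that the paper neither reproduces nor outlines. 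If you want a self-contained proof of necessity you should aim at a Formanov-type reverse bound rather than at an Esseen-smoothing contradiction.
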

 The condition (\ref{equ:1.6}) is named Rotar's. Note that the Rotar condition is  weaker than the following Lindeberg condition (\ref{equ:1.3})  in the sense that $(\ref{equ:1.3})\Longrightarrow (\ref{equ:1.6})$ (the detailed proof can be found in \cite{Shiryaev1996}, Ch. 5, 18.3, p. 310-311). Moreover, the Rotar condition (\ref{equ:1.6}) has no relation to the condition of  asymptotic infinitesimality (\ref{equ:1.5}). The Rotar theorem \ref{the:1.4} confirms that the Rotar condition (\ref{equ:1.6}) is sufficient and necessary for the validity of the CLT for  sums of independent random variables. Thus, the Rotar CLT generalizes the classical Lindeberg-Feller CLT.  Recently, the extension and generalization of the Rotar condition and the Rotar CLT have been studied by in \cite{Formanov2002},  \cite{Formanov2010}, \cite{Formanov2019}, and \cite{Formanov2022}.

Following \cite{Robbins1948}, many authors (e. g.  \cite{Blum1963}, \cite{Feller1971}, \cite{Gnedenko1996}, \cite{Gut2005}, \cite{Kalashnikov1997}, \cite{Kruglov1990}, \cite{Neammanee2004}, \cite{Renyi1970}, \cite{Robbins1948}, \cite{Rychlik1976}, \cite{Rychlik1979}, \cite{Shanthikumar1984}) have established the  CLTs for random sums with the  conditions  for their validity.  Therefore, randomizing the theorem \ref{the:1.4} with its order of approximation is an interesting research issue.

The main purpose of this paper is to extend Theorem \ref{the:1.4} to the case of random summations in the sense that the non-random number $n$ of the normalized sums $\frac{S_{n}}{B_{n}}$ will be replaced by the positive integer-valued random variables $\nu_{n}$ which are independent of all $X_{j}$ for $j\geq 1$ and $n\geq 1.$ The Rotar CLT for sums of a random number of independent (not necessarily identically distributed) random variables is established with the random Rotar condition. The order of approximation in that random Rotar CLT is also established in this paper. 

This paper is organized as follows: In Sec. 2, we give preliminary results of the random conditions for the validity of the CLTs for random sums. The random Rotar CLT with its detailed proof is presented in Sec. 3, along with the order of approximation.  

\section{Preliminary results}\label{sec:2}
 Let $(\nu_{n}, n\geq 1)$ be a sequence of positive integer-valued random variables such that for each $n,$ the  $\nu_{n}$ and random variables $X_{1}, X_{2}, \cdots$ are independent. Assume that 
\begin{equation}\label{equ:2.1}
 \nu_{n}\stackrel{P}{\longrightarrow}+\infty, \quad\text{as}\quad n\longrightarrow+\infty,
\end{equation}
 where $\stackrel{P}{\longrightarrow}$ denotes the convergence in probability.
 Let us set the following random sums:
 \[
 S_{\nu_{n}}=\sum\limits_{j=1}^{\nu_{n}}X_{j}\quad\text{and}\quad B^{2}_{\nu_{n}}=\sum\limits_{j=1}^{\nu_{n}}\sigma^{2}_{j}.
  \]
  Throughout this paper, for $n\geq 1,$ assume that the random variables $B_{\nu_{n}}>0$ almost surely and $B_{\nu_{n}}\stackrel{P}{\longrightarrow}\infty$ when $n\to\infty.$ \\
  Following \cite{Gut2005}, it is easy to check that
  \[
\begin{split}
 & \mathbb{E}S_{\nu_{n}}=\sum\limits_{n=1}^{\infty}P(\nu_{n}=n)\mathbb{E}S_{n}=\sum\limits_{n=1}^{\infty}P(\nu_{n}=n)\sum\limits_{j=1}^{n}\mathbb{E}X_{j}=0,\\
& \mathbb{D}S_{\nu_{n}}=\sum\limits_{n=1}^{\infty}P(\nu_{n}=n)\mathbb{D}S_{n}=\sum\limits_{n=1}^{\infty}P(\nu_{n}=n)\sum\limits_{j=1}^{n}\mathbb{D}X_{j}=B^{2}_{\nu_{n}}.
\end{split}  
  \]
 Consequently,
  \[
  \mathbb{E}\bigg(\frac{S_{\nu_{n}}-\mathbb{E}S_{\nu_{n}}}{\sqrt{\mathbb{D}S_{\nu_{n}}}} \bigg)= \mathbb{E}\bigg(\frac{S_{\nu_{n}}}{B_{\nu_{n}}} \bigg)=0\quad\text{and}\quad \mathbb{D}\bigg(\frac{S_{\nu_{n}}-\mathbb{E}S_{\nu_{n}}}{\sqrt{\mathbb{D}S_{\nu_{n}}}} \bigg)=\mathbb{D}\bigg(\frac{S_{\nu_{n}}}{B_{\nu_{n}}} \bigg)=1.
  \]
Now we are interested in the CLT for random summations presented in the following form:
\[
\lim\limits_{n\to\infty}P\bigg(\frac{1}{B_{\nu_{n}}}\sum\limits_{j=1}^{\nu_{n}}X_{j}<x \bigg)=\Phi_{0,1}(x), \quad\text{for}\quad x\in\mathbb{R}.
\]
Write
\[
\Delta_{\nu_{n}}=\sup\limits_{x\in\mathbb{R}}\bigg|P\bigg(\frac{1}{B_{\nu_{n}}}\sum\limits_{j=1}^{\nu_{n}}X_{j}<x\bigg)-\Phi_{0,1}(x) \bigg|.
\]
Then, the CLT for random sums states as follows:
\begin{definition}\label{def:2.1} (Random CLT)\quad A sequence $(X_{j}, j\geq 1)$ is said to obey the random CLT  if 
\begin{equation}\label{equ:2.2}
\lim\limits_{n\to\infty}\Delta_{\nu_{n}}=0.
\end{equation}
\end{definition}
\begin{remark}\label{rem:2.1}\quad If $P(\nu_{n}=n)=1,$ the (8) deduces the (1). 
\end{remark}
It is obvious that the CLTs for random sums are extensions and generalizations  of the CLTs for non-random sums, and the typical results may be found in \cite{Feller1971}, \cite{Gnedenko1996}, \cite{Gut2005}, \cite{Kalashnikov1997}, \cite{Kruglov1990}, \cite{Neammanee2004}, \cite{Renyi1970}, \cite{Robbins1948}, and \cite{Rychlik1976}, etc.

The following are conditions in random versions for the validity of CLTs for random summations of the independent random variables.
\begin{definition}\label{def:2.2}(Random Lindeberg condition)\quad A sequence $(X_{j}, j\geq 1)$ is said to satisfy the random Lindeberg condition  if,  for any $\epsilon >0$ 
\begin{equation}\label{equ:2.3}
\lim\limits_{n\to\infty}\mathbb{E}\bigg\{\frac{1}{B^{2}_{\nu_{n}}}\sum\limits_{j=1}^{\nu_{n}}\int\limits_{|x|>\epsilon B_{\nu_{n}}}x^{2}dF_{j}(x) \bigg\}=0.
\end{equation} 
\end{definition}
\begin{remark}\label{rem:2.2}\quad According to \cite{Gut2005} (Section 9, page 192), when $P(\nu_{n}=n)=1$ for every $n\geq 1,$ the random Lindeberg condition  (\ref{equ:2.3}) deduces the non-random Lindeberg condition (\ref{equ:1.3}) by the following fact:
\[
\mathbb{E}\bigg\{\frac{1}{B^{2}_{\nu_{n}}}\sum\limits_{j=1}^{\nu_{n}}\int\limits_{|x|>\epsilon B_{\nu_{n}}}x^{2}dF_{j}(x) \bigg\}
=\sum\limits_{n=1}^{\infty}P(\nu_{n}=n)\mathbb{E}\bigg\{\frac{1}{B^{2}_{n}}\sum\limits_{j=1}^{n}\int\limits_{|x|>\epsilon B_{n}}x^{2}dF_{j}(x)\bigg\},
\]
for any $\epsilon>0$ and for $n\geq 1.$
\end{remark}
The following result was proved by Rychlik \cite{Rychlik1976} (Lemma 1, on page 149).
\begin{proposition}\label{pro:2.1}\quad If a sequence $(X_{j}, j\geq 1)$ of independent random variables satisfies the non-random Lindeberg condition  (\ref{equ:1.3}) and if the condition (\ref{equ:2.1}) holds,  then the  random Lindeberg condition (\ref{equ:2.3}) is valid. 
\end{proposition}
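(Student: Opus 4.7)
The plan is to condition on the value of $\nu_n$ and exploit its independence from the underlying sequence $(X_j)$ so as to reduce the random Lindeberg functional to a mixture of deterministic Lindeberg functionals. Write
\[
L_{k}(\epsilon) := \frac{1}{B^{2}_{k}}\sum_{j=1}^{k}\int_{|x|>\epsilon B_{k}} x^{2}\, dF_{j}(x),\qquad k\geq 1.
\]
Since for each fixed $k$ the quantity $B_{k}^{2}=\sum_{j=1}^{k}\sigma_{j}^{2}$ is deterministic, and $\nu_{n}$ is independent of $(X_{j})_{j\geq 1}$, conditioning on $\{\nu_{n}=k\}$ as in Remark \ref{rem:2.2} yields
\[
\mathbb{E}\bigg\{\frac{1}{B^{2}_{\nu_{n}}}\sum_{j=1}^{\nu_{n}}\int_{|x|>\epsilon B_{\nu_{n}}} x^{2}\, dF_{j}(x)\bigg\} = \sum_{k=1}^{\infty} P(\nu_{n}=k)\, L_{k}(\epsilon).
\]
This is the basic identity the rest of the argument rests upon.

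The next ingredient is a uniform a priori bound. Since $\int_{|x|>\epsilon B_{k}} x^{2}\, dF_{j}(x) \leq \int_{-\infty}^{+\infty} x^{2}\, dF_{j}(x) = \sigma_{j}^{2}$, summation gives $L_{k}(\epsilon)\leq 1$ for every $k\geq 1$ and every $\epsilon>0$. This uniform bound is what makes it possible to truncate the series in $k$ without worrying about the tail masses of $\nu_{n}$.

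With these two facts, split the sum at a threshold $N\geq 1$:
\[
\sum_{k=1}^{\infty} P(\nu_{n}=k)\, L_{k}(\epsilon) \leq \sum_{k=1}^{N} P(\nu_{n}=k)\cdot 1 \;+\; \sup_{k>N} L_{k}(\epsilon) \cdot \sum_{k>N} P(\nu_{n}=k) \leq P(\nu_{n}\leq N) + \sup_{k>N} L_{k}(\epsilon).
\]
Given $\eta>0$, the non-random Lindeberg condition (\ref{equ:1.3}) lets me choose $N=N(\eta,\epsilon)$ so large that $\sup_{k>N} L_{k}(\epsilon) < \eta/2$; then the hypothesis $\nu_{n}\stackrel{P}{\longrightarrow}\infty$ of (\ref{equ:2.1}) lets me choose $n_{0}$ so that $P(\nu_{n}\leq N)<\eta/2$ for all $n\geq n_{0}$. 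Combining yields $\mathbb{E}[L_{\nu_{n}}(\epsilon)]<\eta$ for $n\geq n_{0}$, which is (\ref{equ:2.3}).

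There is no serious obstacle here; the only slightly delicate point is verifying that the expectation really does decompose as the asserted mixture, which uses the independence of $\nu_{n}$ from $(X_{j})$ together with Fubini/Tonelli (all integrands are nonnegative). The uniform bound $L_{k}(\epsilon)\leq 1$ is what allows the tail probability $P(\nu_{n}\leq N)$ to carry the first term without having to control the individual $L_{k}(\epsilon)$ for small $k$, where they might be arbitrarily close to $1$.
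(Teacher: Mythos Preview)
Your argument is correct. The paper does not supply its own proof of this proposition; it simply attributes the result to Rychlik \cite{Rychlik1976} (Lemma~1, p.~149) and moves on. What you have written is essentially the standard proof of that lemma: the mixture identity $\mathbb{E}[L_{\nu_{n}}(\epsilon)]=\sum_{k\geq 1}P(\nu_{n}=k)L_{k}(\epsilon)$, the uniform bound $L_{k}(\epsilon)\leq 1$, and the two-term split governed respectively by $\nu_{n}\stackrel{P}{\longrightarrow}\infty$ and by the deterministic Lindeberg condition.

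One small remark: you justify the mixture identity by invoking independence of $\nu_{n}$ from $(X_{j})$ together with Fubini--Tonelli, but in fact no independence is needed at that step. The functions $F_{j}$ are deterministic distribution functions, so each $L_{k}(\epsilon)$ is a fixed number and the quantity inside the expectation is simply $L_{\nu_{n}}(\epsilon)$, a measurable function of $\nu_{n}$ alone. The identity is then just the law-of-the-unconscious-statistician formula $\mathbb{E}[g(\nu_{n})]=\sum_{k}g(k)P(\nu_{n}=k)$. Independence of $\nu_{n}$ and the $X_{j}$ is of course part of the ambient setup and is what makes the random Lindeberg condition meaningful, but it plays no role in this particular computation. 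This does not affect the validity of your proof.
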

\begin{proposition}\label{pro:2.2}\quad The random Lindeberg condition (\ref{equ:2.3}) is satisfied automatically for any sequence of independent identically distributed (i.i.d.) random  variables.
\end{proposition}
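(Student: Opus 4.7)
The plan is to exploit the i.i.d.\ hypothesis to reduce the random Lindeberg expression to a deterministic tail integral evaluated at $\sqrt{\nu_n}$, and then pass the limit inside the expectation by a bounded convergence argument. Write $F_j=F$ and $\sigma_j^2=\sigma^2$ for every $j$, so that $B_{\nu_n}^2=\nu_n\sigma^2$. Since the integrand depends on $j$ only through $F_j=F$, the inner sum of $\nu_n$ identical terms cancels the $\nu_n$ in $B_{\nu_n}^{-2}$, giving
\[
\mathbb{E}\!\left\{\frac{1}{B_{\nu_n}^2}\sum_{j=1}^{\nu_n}\int_{|x|>\epsilon B_{\nu_n}}x^{2}dF_j(x)\right\}
=\frac{1}{\sigma^{2}}\,\mathbb{E}\!\left\{\int_{|x|>\epsilon\sigma\sqrt{\nu_n}}x^{2}dF(x)\right\}.
\]

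Next I would introduce the deterministic auxiliary function $g(m):=\int_{|x|>\epsilon\sigma\sqrt{m}}x^{2}dF(x)$ for $m\in\mathbb{N}$. Because $\int_{\mathbb{R}}x^{2}dF(x)=\sigma^{2}<+\infty$, the dominated convergence theorem on $\mathbb{R}$ (with dominating function $x^{2}$) yields $g(m)\to 0$ as $m\to\infty$, and moreover $0\le g(m)\le\sigma^{2}$ for every $m$. Thus $g:\mathbb{N}\to[0,\sigma^{2}]$ is bounded and tends to $0$ at infinity.

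Now I would combine this with the assumption (\ref{equ:2.1}) that $\nu_n\stackrel{P}{\longrightarrow}+\infty$: since $g$ is continuous-at-infinity in the sense above, the composition $g(\nu_n)$ converges to $0$ in probability, while still being uniformly bounded by $\sigma^2$. Applying the bounded convergence theorem to the expectation of $g(\nu_n)$ gives
\[
\lim_{n\to\infty}\mathbb{E}\{g(\nu_n)\}=0,
\]
and dividing by $\sigma^{2}$ yields (\ref{equ:2.3}), which is the required random Lindeberg condition.

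The only delicate point is the justification that $g(\nu_n)\stackrel{P}{\to}0$ from the deterministic limit $g(m)\to 0$ together with $\nu_n\stackrel{P}{\to}\infty$; this is a standard but worth-stating $\epsilon$-$\delta$ argument (given $\eta>0$, pick $M$ so that $g(m)<\eta$ for $m\ge M$, then use $P(\nu_n<M)\to 0$). Everything else is a clean algebraic simplification enabled by identical distribution, so no moment condition beyond the standing assumption $\mathbb{D}X_1=\sigma^{2}<+\infty$ is required.
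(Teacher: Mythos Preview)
Your proof is correct and follows essentially the same route as the paper: both reduce the random Lindeberg expression in the i.i.d.\ case to $\sigma^{-2}\,\mathbb{E}\{\int_{|x|>\epsilon\sigma\sqrt{\nu_n}}x^{2}\,dF(x)\}$ and then argue that this expectation vanishes as $n\to\infty$ using condition~(\ref{equ:2.1}) and the finiteness of $\sigma^{2}$. Your version is in fact more carefully justified, since you make explicit the bounded convergence argument (introducing $g(m)$ and showing $g(\nu_n)\stackrel{P}{\to}0$), whereas the paper simply asserts that the integral vanishes because the region of integration shrinks to $\emptyset$.
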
 
\begin{proof} \quad Let   $(X_{j}, j\geq 1)$ be a sequence of i.i.d. random variables such that
\[
\mathbb{E}X_{j}=0, \quad \mathbb{D}X_{j}=\sigma^{2}\in (0, \infty), \quad B^{2}_{n}=\sum\limits_{j=1}^{n}\mathbb{D}X_{j}=n\sigma^{2}, \quad\text{for}\quad  j=1, 2, \cdots, n; n\geq 1.
\]
Then, for any $\epsilon>0,$ we have
\[
\frac{1}{B^{2}_{n}}\sum\limits_{j=1}^{n}\int\limits_{|x|>\epsilon B_{k}}x^{2}dF_{j}(x)=\frac{n}{n\sigma^{2}}\int\limits_{|x|>\epsilon \sigma\sqrt{n}}x^{2}dF_{j}(x)=\frac{1}{\sigma^{2}}\int\limits_{|x|>\epsilon \sigma\sqrt{n}}x^{2}dF_{1}(x).
\]
Following \cite{Gut2005}, for every $\epsilon>0,$ it follows that
\begin{equation}\label{equ:2.4}
\mathbb{E}\bigg\{\frac{1}{B^{2}_{\nu_{n}}}\sum\limits_{j=1}^{\nu_{n}}\int\limits_{|x|>\epsilon B_{\nu_{n}}}x^{2}dF_{j}(x)\bigg\}=\mathbb{E}\bigg\{\frac{1}{\sigma^{2}}\int\limits_{|x|>\epsilon \sigma \sqrt{\nu_{n}}}x^{2}dF_{1}(x)\bigg\}.
\end{equation}
The right side of (\ref{equ:2.4}) will vanish since $\{x: |x|>\epsilon \sigma \sqrt{\nu_{n}}\}\downarrow \emptyset$ with the suggestion that the condition (\ref{equ:2.1}) holds and  the boundness of  $\sigma^{2}=\mathbb{D}X_{1}<+\infty$ as suggested. Therefore, the random Lindeberg condition (\ref{equ:2.3}) is satisfied. 
\end{proof}
The following condition may be obtained from the random Lindeberg condition.
\begin{definition}\label{def:2.3}(Random Feller condition)\quad A sequence $(X_{n}, n\geq 1)$  is said to satisfy the random Feller condition if 
\begin{equation}\label{equ:2.5}
\lim\limits_{n\to\infty}\mathbb{E}\bigg\{\frac{1}{B^{2}_{\nu_{n}}}\max\limits_{1\leq j\leq \nu_{n}}\sigma^{2}_{j} \bigg\}=0.
\end{equation} 
\end{definition}
 \begin{proposition}\label{pro:2.3}\quad For every $\epsilon>0$ 
  \[
 (\ref{equ:2.3})\Longrightarrow (\ref{equ:2.5}).
 \] 
\end{proposition}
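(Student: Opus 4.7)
The plan is to reduce the random statement to the familiar deterministic inequality and then average. For a fixed realization $\nu_n = m$, the standard Lindeberg-style bookkeeping gives
\[
\sigma_j^{2} \;=\; \int_{|x|\leq \epsilon B_{m}} x^{2}\, dF_{j}(x) \;+\; \int_{|x|>\epsilon B_{m}} x^{2}\, dF_{j}(x) \;\leq\; \epsilon^{2} B_{m}^{2} + \sum_{k=1}^{m}\int_{|x|>\epsilon B_{m}} x^{2}\, dF_{k}(x)
\]
for every $1\leq j\leq m$. Taking the maximum over $j$ and dividing by $B_{m}^{2}$ yields the deterministic pointwise bound
\[
\frac{1}{B_{m}^{2}}\max_{1\leq j\leq m}\sigma_{j}^{2} \;\leq\; \epsilon^{2} \;+\; \frac{1}{B_{m}^{2}}\sum_{k=1}^{m}\int_{|x|>\epsilon B_{m}} x^{2}\, dF_{k}(x).
\]

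First I would substitute $m = \nu_n$ (this is legitimate since $\nu_n$ is independent of the $X_j$, and the inequality above holds for every integer value) and then take expectations on both sides. This gives
\[
\mathbb{E}\!\left\{\frac{1}{B_{\nu_{n}}^{2}}\max_{1\leq j\leq \nu_{n}}\sigma_{j}^{2}\right\} \;\leq\; \epsilon^{2} \;+\; \mathbb{E}\!\left\{\frac{1}{B_{\nu_{n}}^{2}}\sum_{j=1}^{\nu_{n}}\int_{|x|>\epsilon B_{\nu_{n}}} x^{2}\, dF_{j}(x)\right\}.
\]
Letting $n\to\infty$ and invoking the hypothesis (\ref{equ:2.3}), the second term vanishes, so
\[
\limsup_{n\to\infty}\mathbb{E}\!\left\{\frac{1}{B_{\nu_{n}}^{2}}\max_{1\leq j\leq \nu_{n}}\sigma_{j}^{2}\right\} \;\leq\; \epsilon^{2}.
\]
Since $\epsilon>0$ is arbitrary, the limit is $0$, which is precisely (\ref{equ:2.5}).

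There is no real obstacle; the proof is essentially the classical argument with the independence of $\nu_n$ from $(X_j)$ used to justify replacing the deterministic index $n$ by the random index $\nu_n$ before applying expectation. The only technical point worth remarking on is that the bound holds \emph{pathwise} in $\omega$ (for each value of $\nu_n(\omega)$), so no measurability or integrability issue arises when taking expectations.
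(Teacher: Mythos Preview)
Your proof is correct and follows essentially the same route as the paper's: establish the deterministic pointwise inequality $\frac{1}{B_m^2}\max_{1\le j\le m}\sigma_j^2 \le \epsilon^2 + \frac{1}{B_m^2}\sum_{j=1}^m\int_{|x|>\epsilon B_m} x^2\,dF_j(x)$ by splitting each $\sigma_j^2$, then substitute $m=\nu_n$, take expectations, and let $\epsilon\downarrow 0$. Your derivation of the deterministic bound is in fact cleaner than the paper's display (which starts with the unnecessarily weak step $\max_j\sigma_j^2\le\sum_j\sigma_j^2$ before arriving at the same conclusion).
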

\begin{proof}
For any $\epsilon>0,$ it is easily seen that
\[
\begin{split}
\max\limits_{1\leq j\leq n}\sigma^{2}_{j}&\leq \sum\limits_{j=1}^{n}\sigma^{2}_{j}=\sum\limits_{j=1}^{n}\int\limits_{|x|\leq\epsilon B_{n}}x^{2}dF_{j}(x)+\sum\limits_{j=1}^{n}\int\limits_{|x| >\epsilon B_{n}}x^{2}dF_{j}(x)\\
&\leq \epsilon^{2}\sum\limits_{j=1}^{n}\sigma^{2}_{j}+\sum\limits_{j=1}^{n}\int\limits_{|x|>\epsilon B_{n}}x^{2}dF_{j}(x).
\end{split}
\]
Therefore,  using the equality $B^{2}_{n}=\sum\limits_{j=1}^{n}\sigma^{2}_{j},$ for any $\epsilon >0,$ we conclude that
\[
\begin{split}
\frac{1}{B^{2}_{n}}\max\limits_{1\leq j\leq n}\sigma^{2}_{j}\leq \epsilon^{2}+\frac{1}{B^{2}_{n}}\sum\limits_{j=1}^{n}\int\limits_{|x|>\epsilon B_{n}}x^{2}dF_{j}(x).
\end{split}
\]
Then, according to \cite{Gut2005}, for any $\epsilon >0,$ it follows that
\[
\mathbb{E}\bigg\{\frac{1}{B^{2}_{\nu_{n}}}\max\limits_{1\leq j\leq \nu_{n}}\sigma^{2}_{j}\bigg\}\leq \epsilon^{2}+\mathbb{E}\bigg\{\frac{1}{B^{2}_{\nu_{n}}}\sum\limits_{j=1}^{\nu_{n}}\int\limits_{|x|>\epsilon B_{n}}x^{2}dF_{j}(x)\bigg\}.
\]
Applying (\ref{equ:2.3}), since $\epsilon >0$ can be chosen arbitrarily small, the proof is finished.
\end{proof}
\begin{remark}\label{rem:2.3}\quad  In the case $P(\nu_{n}=n)=1,$  the non-random Feller condition (\ref{equ:1.4}) is followed  by the random Feller condition (\ref{equ:2.5}).
\end{remark}
To compare with the main object of this paper, which is named Random Rotar CLT, one result is introduced below. Its detailed proof may be found in \cite{Hung2023}. 
\begin{theorem}\label{the:2.1}(Random Lindeberg-Feller CLT) \quad The following statement is true: 
\begin{enumerate}
\item[{i.}] A sequence $(X_{j}, j\geq 1)$ is satisfied  the random CLT, if (\ref{equ:2.3}) holds, that is,
\[
(\ref{equ:2.3})\Longrightarrow (\ref{equ:2.2})\quad\text{as}\quad n\to\infty, \quad\text{for every}\quad \epsilon>0.  
\]
\item[{ii.}] Under the random Feller condition (\ref{equ:2.5}), the random CLT  deduces the (\ref{equ:2.3}), i.e., 
\[
(\ref{equ:2.5})\quad\&\quad  (\ref{equ:2.2})\Longrightarrow (\ref{equ:2.3})\quad\text{as}\quad n\to\infty, \quad\text{for every}\quad \epsilon>0.  
\]
\end{enumerate}
\end{theorem}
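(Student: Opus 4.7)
The plan is to reduce Theorem \ref{the:2.1} to the classical Lindeberg-Feller Theorem \ref{the:1.3} by exploiting the independence of $\nu_n$ from $(X_j)_{j\geq 1}$. Conditioning on $\{\nu_n=k\}$ rewrites every random-sum quantity as an expectation against the law of $\nu_n$: setting $L_k(\epsilon) := B_k^{-2}\sum_{j=1}^{k}\int_{|x|>\epsilon B_k}x^{2}\,dF_j(x)$, the random Lindeberg condition (\ref{equ:2.3}) is exactly $\mathbb{E}[L_{\nu_n}(\epsilon)]\to 0$, and the characteristic function of $S_{\nu_n}/B_{\nu_n}$ becomes $\mathbb{E}[\phi_{\nu_n}(t)]$, where $\phi_k$ denotes the characteristic function of the non-random normalized sum $S_k/B_k$. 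The strategy is to carry the classical estimates inside the expectation over $\nu_n$.

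For part (i), I would invoke the standard Lindeberg-Feller characteristic function estimate
\[
\bigl|\phi_k(t)-e^{-t^{2}/2}\bigr|\leq A(t)\,L_k(\epsilon)+B(t)\,\epsilon,
\]
valid for every $k\geq 1$ and $\epsilon>0$, with $A(t), B(t)$ independent of $k$. Taking expectation over $\nu_n$ and invoking (\ref{equ:2.3}) gives $\limsup_n\bigl|\mathbb{E}\phi_{\nu_n}(t)-e^{-t^{2}/2}\bigr|\leq B(t)\epsilon$, and since $\epsilon>0$ is arbitrary the characteristic function of $S_{\nu_n}/B_{\nu_n}$ converges pointwise to $e^{-t^{2}/2}$. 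L\'{e}vy's continuity theorem then yields weak convergence, which, because the limit $\Phi_{0,1}$ is continuous, upgrades to the uniform convergence (\ref{equ:2.2}) of distribution functions via P\'{o}lya's theorem.

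Part (ii) is the delicate direction. I would argue by contradiction: if (\ref{equ:2.3}) fails, then for some $\epsilon_0>0$ the quantity $\mathbb{E}[L_{\nu_n}(\epsilon_0)]$ stays bounded below along a subsequence, so the mass of $\nu_n$ concentrates on a set of indices $k$ with $L_k(\epsilon_0)$ not small. Combining this with the random Feller condition (\ref{equ:2.5}), the classical converse direction of Theorem \ref{the:1.3} produces, conditionally on those $k$'s, a lower bound of the form $|\phi_k(t)-e^{-t^{2}/2}|\geq c(t)\,L_k(\epsilon_0)-o(1)$; averaging over $\nu_n$ then contradicts (\ref{equ:2.2}). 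The main obstacle is making this lower bound both quantitative and uniform in $k$ so that it survives the integration against $P(\nu_n=k)$, because the classical converse of Lindeberg-Feller is typically phrased asymptotically rather than with explicit constants. This is exactly where the random Feller condition (\ref{equ:2.5}), rather than a pointwise assumption on each fixed $k$, becomes essential: it guarantees the infinitesimality \emph{on average} required to extract a uniform quantitative bound and hence close the contradiction.
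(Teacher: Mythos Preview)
The paper does not actually prove Theorem~\ref{the:2.1}; it only records the statement and defers to \cite{Hung2023}. The natural point of comparison is therefore the paper's proof of the parallel result, Theorem~\ref{the:3.1}, whose two halves follow exactly the template one would expect here.

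Your part~(i) is correct and matches that template. A pointwise-in-$k$ bound of the form $|\phi_k(t)-e^{-t^{2}/2}|\le A(t)L_k(\epsilon)+B(t)\epsilon$ is indeed available from the standard Lindeberg computation (the residual Feller term $t^{4}B_k^{-2}\max_{j}\sigma_j^{2}$ is absorbed via $B_k^{-2}\max_{j}\sigma_j^{2}\le\epsilon^{2}+L_k(\epsilon)$), and averaging over $\nu_n$ together with $|\mathbb{E}\phi_{\nu_n}(t)-e^{-t^{2}/2}|\le\mathbb{E}|\phi_{\nu_n}(t)-e^{-t^{2}/2}|$ finishes exactly as in the paper's inequality~(\ref{equ:3.1}).

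Part~(ii), however, has a real gap, and in fact two independent obstructions. First, the lower bound $|\phi_k(t)-e^{-t^{2}/2}|\ge c(t)L_k(\epsilon_0)-o(1)$ you invoke is not delivered by the classical converse in Theorem~\ref{the:1.3}; that converse is purely asymptotic and carries no uniform constant $c(t)$, and you yourself flag this as ``the main obstacle'' without resolving it. Second, and more fatally, even if such a lower bound held, averaging it over $\nu_n$ bounds $\mathbb{E}\bigl|\phi_{\nu_n}(t)-e^{-t^{2}/2}\bigr|$ from below, whereas hypothesis~(\ref{equ:2.2}) only gives $\bigl|\mathbb{E}\phi_{\nu_n}(t)-e^{-t^{2}/2}\bigr|\to 0$. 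The triangle inequality runs $\bigl|\mathbb{E}[\,\cdot\,]\bigr|\le\mathbb{E}\bigl|\,\cdot\,\bigr|$, so cancellation among the complex numbers $\phi_k(t)-e^{-t^{2}/2}$ can make the mixture characteristic function close to Gaussian even when the individual terms are not small on a set of positive $\nu_n$-probability. Your contradiction does not close.

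The route the paper takes for the necessity half of Theorem~\ref{the:3.1} avoids characteristic functions altogether: it quotes a direct pointwise upper bound of the form (\ref{equ:3.2}), bounding the relevant fraction at level $k$ by $C(\epsilon)\bigl[\Delta_k+B_k^{-2}\max_{j\le k}\sigma_j^{2}\bigr]$, and then averages over $\nu_n$ to obtain (\ref{equ:3.3}). The Lindeberg analogue of that inequality, $L_k(\epsilon)\le C(\epsilon)\bigl[\Delta_k+B_k^{-2}\max_{j\le k}\sigma_j^{2}\bigr]$, is what you should be reaching for; it converts~(\ref{equ:2.2}) and~(\ref{equ:2.5}) into~(\ref{equ:2.3}) by a single expectation, with no contradiction argument and no Jensen obstruction.
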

To establish the main results of this paper, we need an  expression of the standard normal distributed random variable $X^{*}\stackrel{D}{\sim}\mathcal{N}(0,1)$ with zero mean, unit variance and characteristics function $f_{X^{*}}(t)=e^{-\frac{1}{2}t^{2}}.$ Based on $X^{*}$ and $0<\sigma_{j}=\sqrt{\mathbb{D}X_{j}}$ for $j=1, 2, \cdots,$ the sequence $(X_{j}, j\geq 1)$ is formed by $X^{*}_{j}\stackrel{D}{=}\sigma_{j}X^{*}$ for $j=1, 2, \cdots.$ Assume that the random variables $X^{*}_{j}, X^{*}_{2}, \cdots$ are independent, and  three sequences $(X_{j}, j\geq 1), (X^{*}_{j}, j\geq 1), (\nu_{n}, n\geq 1)$ are independent for $j=1, 2, \cdots$ and $n\geq 1.$ Thus, $(X^{*}_{j}, j\geq 1) $ is a sequence of  independent normal distributed random variables with zero means, finite variances $\sigma^{2}_{j}\in (0, +\infty)$ and characteristic function $f_{X^{*}_{j}}(t)=e^{-\frac{1}{2}\sigma^{2}_{j}t^{2}},$ for $j\geq 1.$ Here and from now on,  the distribution function of $X^{*}_{j}$ is denoted by
\[
P(X^{*}_{j}<x)=\Phi_{j}(x)=\Phi_{0,1}(x/\sigma_{j})\quad\text{for}\quad j=1, 2, \cdots.
\]
 \begin{lemma}\label{lem:2.1}\quad For the $X^{*}\stackrel{D}{\sim}\mathcal{N}(0,1),$ the following expression 
 \[
 X^{*}\stackrel{D}{=}\frac{1}{B_{\nu_{n}}}\sum\limits_{j=1}^{\nu_{n}}X^{*}_{j}
 \]
 is true, where $\stackrel{D}{=}$ denotes the equality in distribution.
 \end{lemma}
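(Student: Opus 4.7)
The plan is to prove the distributional identity via characteristic functions, exploiting independence of $\nu_{n}$ from the $X^{*}_{j}$'s and the stability of the normal distribution under sums and scalings.

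First I would compute the characteristic function of the random sum $Y_{n} := \frac{1}{B_{\nu_{n}}}\sum_{j=1}^{\nu_{n}}X^{*}_{j}$ by conditioning on the value of $\nu_{n}$. For each fixed $k\geq 1$, the event $\{\nu_{n}=k\}$ is independent of $(X^{*}_{1},\dots,X^{*}_{k})$, so on this event the conditional characteristic function of $Y_{n}$ at $t\in\mathbb{R}$ factorizes:
\[
\mathbb{E}\bigl(e^{itY_{n}}\,\bigm|\,\nu_{n}=k\bigr)=\prod_{j=1}^{k}f_{X^{*}_{j}}\!\bigl(t/B_{k}\bigr)=\prod_{j=1}^{k}\exp\!\Bigl(-\tfrac{1}{2}\sigma_{j}^{2}\,t^{2}/B_{k}^{2}\Bigr).
\]

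The second step is the cancellation that makes the identity work: since $B_{k}^{2}=\sum_{j=1}^{k}\sigma_{j}^{2}$, the exponent collapses to $-\tfrac{1}{2}t^{2}$, independently of $k$. Hence the conditional characteristic function is identically $e^{-t^{2}/2}$ for every value of $\nu_{n}$.

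Finally, I would apply the total-probability formula to lift this to the unconditional characteristic function:
\[
f_{Y_{n}}(t)=\sum_{k=1}^{\infty}P(\nu_{n}=k)\,e^{-t^{2}/2}=e^{-t^{2}/2}=f_{X^{*}}(t),
\]
where the interchange is legitimate because $\sum_{k}P(\nu_{n}=k)=1$ and the summand is bounded. Uniqueness of characteristic functions yields $Y_{n}\stackrel{D}{=}X^{*}$. There is no real obstacle here; the only subtle point is verifying that $B_{\nu_{n}}$ is almost surely positive (so the normalization is well defined), which is already assumed at the start of Section~\ref{sec:2}, and that the independence assumption between $(\nu_{n})$ and $(X^{*}_{j})$ justifies the conditional factorization—both are given hypotheses, so the proof reduces to the one-line characteristic-function computation above.
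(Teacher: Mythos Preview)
Your proof is correct and follows essentially the same route as the paper: compute the characteristic function of the normalized random sum by conditioning on $\nu_{n}$, factor via independence, use $B_{k}^{2}=\sum_{j=1}^{k}\sigma_{j}^{2}$ to collapse the exponent to $-t^{2}/2$ independently of $k$, and conclude by uniqueness of characteristic functions. The paper's version is slightly terser (it packages the conditioning step as a citation to Gut), but the argument is the same.
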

 \begin{proof}\quad Following \cite{Gut2005}, in view of independence of the random variables $X^{*}_{j}\stackrel{D}{\sim}\mathcal{N}(0, \sigma^{2}_{j})$ for $j=1, 2, \cdots,$ the characteristic function of the random variable $\frac{1}{B_{\nu_{n}}}\sum\limits_{j=1}^{\nu_{n}}X^{*}_{j}$ is given as follows
 \begin{equation}\label{equ:2.6}
 \begin{split}
& f_{\frac{1}{B_{\nu_{n}}}\sum\limits_{j=1}^{\nu_{n}}X^{*}_{j}}(t)=
 \sum\limits_{n=1}^{\infty}P(\nu_{n}=n) f_{\frac{1}{B_{n}}\sum\limits_{j=1}^{n}X^{*}_{j}}(t)\\
 &=\sum\limits_{n=1}^{\infty}P(\nu_{n}=n)\prod_{j=1}^{n}f_{X^{*}_{j}}(t/B_{n})
 =\sum\limits_{n=1}^{\infty}P(\nu_{n}=n)e^{-\frac{1}{2}\frac{t^{2}}{B^{2}_{n}}\sum\limits_{j=1}^{n}\sigma^{2}_{j}}\\
 &=e^{-\frac{1}{2}t^{2}}=f_{X^{*}}(t).
\end{split} 
 \end{equation}
 In (\ref{equ:2.6}), we used the fact that $B^{2}_{n}=\sum\limits_{j=1}^{n}\sigma^{2}_{j}.$ Thus,  the characteristic function of $\frac{1}{B_{n}}\sum\limits_{j=1}^{n}X^{*}_{j}$ coincides with the characteristic function of standard normal distributed random variable $X^{*}.$ Following the uniqueness theorem (see for instance \cite{Gut2005}, Theorem 1.2, page 160), the proof  is complete.
 \end{proof}
 \begin{remark}\label{rem:2.4}\quad  When $P(\nu_{n}=n)=1,$ from Lemma \ref{lem:2.1}, it follows that
 \[
 X^{*}\stackrel{D}{=}\frac{1}{B_{n}}\sum\limits_{j=1}^{n}X^{*}_{j}.
 \]
\end{remark}
To extend the Rotar theorem \ref{the:1.4} to the case of  random summations, we need the random Rotar condition as follows.
\begin{definition}\label{def:2.4}\quad A sequence $(X_{j}, j\geq 1)$ is said to obey the random Rotar condition if, for any $\epsilon >0$
\begin{equation}\label{equ:2.7}
\lim\limits_{n\to\infty}\mathbb{E}\bigg\{\frac{1}{B_{\nu_{n}}}\sum\limits_{j=1}^{\nu_{n}}\int\limits_{|x|>\epsilon B_{\nu_{n}}}|x|\cdot |F_{j}(x)-\Phi_{j}(x)|dx\bigg\}=0,
\end{equation}
\end{definition}
The following result confirms that the random Rotar condition (\ref{equ:2.7}) is weaker than the random Lindeberg condition (\ref{equ:2.3}).
\begin{proposition}\label{pro:2.4}\quad For any $\epsilon >0,$ the random Rotar condition (\ref{equ:2.7}) is deduced by  the random Lindeberg condition  (\ref{equ:2.3}).
\end{proposition}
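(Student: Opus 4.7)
The plan is to reduce Proposition \ref{pro:2.4} to a deterministic (path-wise) inequality that bounds the Rotar integrand by a Lindeberg integrand plus a Gaussian-tail remainder, and then to take expectation against $\nu_n$, controlling the Gaussian piece through the chain ``random Lindeberg $\Rightarrow$ random Feller $\Rightarrow$ vanishing Gaussian tail'' made available by Proposition \ref{pro:2.3}.

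For each fixed $k\geq 1$ and any $c>0$, I begin with the elementary tail bound $|F_{j}(x)-\Phi_{j}(x)|\leq (1-F_{j}(x))+(1-\Phi_{j}(x))$ for $x>0$ and the symmetric bound for $x<0$. Combined with the Fubini identity
\[
\int_{c}^{\infty}\! x\,(1-F_{j}(x))\,dx \;=\; \tfrac{1}{2}\int_{c}^{\infty}(y^{2}-c^{2})\,dF_{j}(y)\;\leq\;\tfrac{1}{2}\int_{c}^{\infty}y^{2}\,dF_{j}(y),
\]
and its analogue for $\Phi_{j}$, summation over $j=1,\dots,k$ and the choice $c=\epsilon B_{k}$ produce the deterministic inequality
\[
\sum_{j=1}^{k}\int_{|x|>\epsilon B_{k}}\!|x|\,|F_{j}(x)-\Phi_{j}(x)|\,dx \;\leq\; \sum_{j=1}^{k}\int_{|x|>\epsilon B_{k}}\!x^{2}\,dF_{j}(x)\;+\;\sum_{j=1}^{k}\int_{|x|>\epsilon B_{k}}\!x^{2}\,d\Phi_{j}(x).
\]
After the normalisation matching (\ref{equ:2.7}) on the left and (\ref{equ:2.3}) in the first term on the right, the Gaussian term is handled by the change of variables $z=x/\sigma_{j}$, which bounds it by $g\bigl(\epsilon B_{k}/\max_{1\leq j\leq k}\sigma_{j}\bigr)$, where $g(R):=\int_{|z|>R}z^{2}\,d\Phi_{0,1}(z)\downarrow 0$ as $R\to\infty$ and $0\leq g\leq 1$.

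Substituting $k\to\nu_{n}$ and taking expectations, the $F_{j}$-tail contribution is exactly (\ref{equ:2.3}), which vanishes by hypothesis. For the Gaussian residual, Proposition \ref{pro:2.3} applied to (\ref{equ:2.3}) delivers the random Feller condition (\ref{equ:2.5}); hence $\max_{1\leq j\leq\nu_{n}}\sigma_{j}^{2}/B_{\nu_{n}}^{2}\to 0$ in $L^{1}$, and therefore in probability, so $g\bigl(\epsilon B_{\nu_{n}}/\max_{j\leq\nu_{n}}\sigma_{j}\bigr)\to 0$ in probability; the uniform bound $g\leq 1$ lets the dominated convergence theorem conclude that its expectation also vanishes. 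Combining the two estimates yields (\ref{equ:2.7}). The principal obstacle is this Gaussian residual: its expectation cannot be controlled index-by-index but only through the uniform (in $j\leq\nu_{n}$) smallness of the Feller ratio $\sigma_{j}/B_{\nu_{n}}$, which is precisely what Proposition \ref{pro:2.3} combined with bounded convergence supplies.
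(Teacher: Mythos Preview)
Your argument mirrors the paper's exactly: obtain a pointwise bound of the Rotar sum by the Lindeberg sum plus a Gaussian tail, take expectations, and let both pieces vanish. You derive the pointwise bound via the Fubini identity (the paper simply quotes it from Rotar's monograph), and you dispatch the Gaussian residual via Proposition~\ref{pro:2.3} plus bounded convergence rather than, as the paper does, by invoking Rychlik's lemma that the random Lindeberg condition also holds for the Gaussian sequence $(X_j^*)$. These are cosmetic differences.

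There is, however, a genuine gap at the sentence ``After the normalisation matching (\ref{equ:2.7}) on the left and (\ref{equ:2.3}) in the first term on the right.'' Your displayed inequality is correct, but no single division realizes both normalizations. Dividing by $B_k^2$ makes the right-hand side exactly what you want---the Lindeberg quantity in (\ref{equ:2.3}) plus $g(\epsilon B_k/\sigma^*)\leq 1$---yet then the left-hand side carries $B_k^{-2}$, not the $B_k^{-1}$ appearing in (\ref{equ:2.7}). Dividing instead by $B_k$ matches (\ref{equ:2.7}) on the left but leaves the first right-hand term as $B_k^{-1}\sum_j\int x^2\,dF_j$, which is \emph{not} controlled by (\ref{equ:2.3}), and the Gaussian piece becomes $B_k\,g(\epsilon B_k/\sigma^*)$, no longer uniformly bounded. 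The paper's own inequality (\ref{equ:2.8}) exhibits the same mismatch; its source is that Rotar's condition, in the triangular-array form with $\sum_k\sigma_{nk}^2=1$ used by Shiryaev and Rotar, translates under $X_{nj}=X_j/B_n$ to a $B_n^{-2}$ prefactor, not $B_n^{-1}$. With that corrected normalization your proof is complete; as literally written, the normalization step does not go through.
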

\begin{proof}\quad Following \cite{Rotar1996} (inequality on page 311), we have
\begin{equation}\label{equ:2.8}
\begin{split}
&\frac{1}{B_{n}}\sum\limits_{j=1}^{n}\int\limits_{|x|>\epsilon B_{n}}|x|\cdot |F_{j}(x)-\Phi_{j}(x)|dx
\leq \frac{1}{B^{2}_{n}}\sum\limits_{j=1}^{n}\int\limits_{|x|>\epsilon B_{n}}x^{2}dF_{j}(x)\\
&+\frac{1}{B^{2}_{n}}\sum\limits_{j=1}^{n}\sigma^{2}_{j}\int\limits_{|x|>\epsilon B_{n}/\sigma^{*}}x^{2}d\Phi_{0, 1}(x)=\frac{1}{B^{2}_{n}}\sum\limits_{j=1}^{n}\int\limits_{|x|>\epsilon B_{n}}x^{2}dF_{j}(x)+\int\limits_{|x|>\epsilon B_{n}/\sigma^{*}}x^{2}d\Phi_{0, 1}(x),
\end{split}
\end{equation}
where $\sigma^{*}=\max\limits_{1\leq j\leq n}\sigma_{j}$ and we used the fact that $B^{2}_{n}=\sum\limits_{j=1}^{n}\sigma^{2}_{j}.$ According to \cite{Gut2005},  from the inequality (\ref{equ:2.8}), we have
\begin{equation}\label{equ:2.9}
\begin{split}
&\mathbb{E}\bigg\{\frac{1}{B_{\nu_{n}}}\sum\limits_{j=1}^{\nu_{n}}\int\limits_{|x|>\epsilon B_{\nu_{n}}}|x|\cdot |F_{j}(x)-\Phi_{j}(x)|dx\bigg\}\leq \mathbb{E}\bigg\{\frac{1}{B^{2}_{\nu_{n}}}\sum\limits_{j=1}^{\nu_{n}}\int\limits_{|x|>\epsilon B_{\nu_{n}}}x^{2}dF_{j}(x)\bigg\}\\
&+\mathbb{E}\bigg\{\int\limits_{|x|>\epsilon B_{\nu_{n}}/\sigma^{*}}x^{2}d\Phi_{0,1}(x)\bigg\}.
\end{split}
\end{equation}
On account of  \cite{Rychlik1976} (Lemma 2, page 149), the random Lindeberg condition (\ref{equ:2.3}) takes place for both sequences $(X_{j}, j\geq 1)$ and $(X^{*}_{j}, j\geq 1).$ Two terms on the right side of (\ref{equ:2.9}) will vanish when $n$ tends to infinity. The proof is finished.
\end{proof}

\section{Random Rotar's CLT and  its order of approximation}\label{sec:3}
As a main result in this paper, the randomized Rotar's CLT for independent random variables is presented as follows. 
\begin{theorem}\label{the:3.1}(Random Rotar CLT)\quad For the every $\epsilon>0$ the  following statements are true:
\begin{enumerate}
\item[{i}.] A sequence $(X_{j}, j\geq 1)$ obeys the random Rotar CLT (\ref{equ:2.2}) if the random Rotar condition (\ref{equ:2.7}) holds.
\item[{ii}.] Under the random Feller condition (\ref{equ:2.6}), the (\ref{equ:2.2}) deduces the random Rotar condition (\ref{equ:2.7}).
\end{enumerate}
\end{theorem}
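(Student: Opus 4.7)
\emph{Outline.} The theorem has two parts. Part (ii) follows immediately by chaining already-established results, while Part (i) is the substantive implication and requires adapting Rotar's non-random characteristic-function argument (\cite{Shiryaev1996}, Ch.~5, 18.3) to a random index $\nu_n$.

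\emph{Part (ii).} The plan is a short chain through previous results. Assume (\ref{equ:2.2}) together with the random Feller condition (the ``(\ref{equ:2.6})'' in the statement is a typographical slip for (\ref{equ:2.5}), since (\ref{equ:2.6}) is the characteristic-function display inside the proof of Lemma~\ref{lem:2.1}). By Theorem~\ref{the:2.1}(ii) these hypotheses force the random Lindeberg condition (\ref{equ:2.3}), and Proposition~\ref{pro:2.4} then delivers the random Rotar condition (\ref{equ:2.7}) at once.

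\emph{Part (i).} The plan is to prove pointwise convergence of characteristic functions and then upgrade to uniform convergence of distribution functions. Conditioning on $\{\nu_n=n\}$ and using the independence of $\nu_n$ and $(X_j)$,
\[
f_{S_{\nu_n}/B_{\nu_n}}(t) = \sum_{n=1}^\infty P(\nu_n=n)\prod_{j=1}^n f_{X_j}(t/B_n),
\]
while Lemma~\ref{lem:2.1} supplies the parallel representation $e^{-t^2/2} = \sum_{n=1}^\infty P(\nu_n=n)\prod_{j=1}^n f_{X_j^*}(t/B_n)$. The telescoping inequality $|\prod a_j-\prod b_j|\le \sum_j|a_j-b_j|$, valid since all characteristic functions have modulus at most one, then gives
\[
|f_{S_{\nu_n}/B_{\nu_n}}(t) - e^{-t^2/2}| \le \mathbb{E}\Bigl\{\sum_{j=1}^{\nu_n}|f_{X_j}(t/B_{\nu_n}) - f_{X_j^*}(t/B_{\nu_n})|\Bigr\}.
\]
For the inner quantity I invoke the classical Rotar one-summand estimate: since $F_j$ and $\Phi_j$ share mean $0$ and variance $\sigma_j^2$,
\[
f_{X_j}(s) - f_{X_j^*}(s) = \int (e^{isx}-1-isx)\, d(F_j-\Phi_j)(x),
\]
and splitting at $|x|=\epsilon B_n$, using $|e^{isx}-1-isx|\le s^2x^2/2$ on the inner region and integrating by parts on the outer region (boundary terms vanish because the finite variances of $F_j$ and $\Phi_j$ force $x(F_j(x)-\Phi_j(x))\to 0$ at $\pm\infty$), one arrives with $s=t/B_n$ at an estimate of the form
\[
\sum_{j=1}^{n}|f_{X_j}(t/B_n)-f_{X_j^*}(t/B_n)| \le C t^2\epsilon + \frac{C|t|}{B_n}\sum_{j=1}^n \int_{|x|>\epsilon B_n}|x|\cdot|F_j(x)-\Phi_j(x)|\, dx
\]
for an absolute constant $C$. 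Averaging this bound over $\nu_n$ and invoking (\ref{equ:2.7}) drives the second term to $0$ as $n\to\infty$; letting $\epsilon\downarrow 0$ afterwards kills the first. Thus the characteristic function of $S_{\nu_n}/B_{\nu_n}$ converges pointwise to $e^{-t^2/2}$; L\'evy's continuity theorem promotes this to weak convergence, and Polya's theorem (since $\Phi_{0,1}$ is continuous) upgrades weak convergence to the uniform convergence (\ref{equ:2.2}).

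\emph{Main obstacle.} The delicate step is the single-summand Rotar inequality in the penultimate display: one must exploit the matching of first two moments to subtract the linear (and effectively the quadratic) term from $e^{isx}$, and then convert the outer-tail contribution into the weighted $L^1$-type norm $\int_{|x|>\epsilon B_n}|x|\cdot|F_j-\Phi_j|\,dx$ by a careful integration by parts; this is exactly the classical Rotar construction and has to be executed with care. A secondary technical point is the commutation of $\mathbb{E}$ with the sum over $j$ and with the spatial integrals, which is justified by nonnegativity and Fubini-Tonelli together with the independence of $\nu_n$ from $(X_j)_{j\ge 1}$.
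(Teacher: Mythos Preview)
Your Part~(i) strategy coincides with the paper's: condition on $\nu_n$, invoke the non-random Rotar characteristic-function bound from \cite{Shiryaev1996}, and average. However, the single-summand estimate you display does not follow from the argument you sketch. With only the linear subtraction $e^{isx}-1-isx$ and the bound $|e^{isx}-1-isx|\le s^2x^2/2$ on $|x|\le\epsilon B_n$, the inner contribution is at most $\tfrac{s^2}{2}\int x^2(dF_j+d\Phi_j)\le s^2\sigma_j^2$, which sums over $j$ to $t^2$ with \emph{no} $\epsilon$ factor; your claimed $Ct^2\epsilon$ does not come out. One must additionally subtract the quadratic term $-\tfrac{s^2x^2}{2}$ (legitimate because the second moments of $F_j$ and $\Phi_j$ coincide), after which the inner remainder is $O(|sx|^3)\le |s|^3\epsilon B_n\,x^2$ and sums to $C\epsilon|t|^3$, while the outer piece after integration by parts carries $|h'(x)|\le 2s^2|x|$, producing a coefficient $2t^2$ rather than $|t|$ in front of the Rotar sum. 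The correct inequality is the paper's~(\ref{equ:3.1}),
\[
\bigl|\mathbb{E}e^{itS_{\nu_n}/B_{\nu_n}}-e^{-t^2/2}\bigr|\le \epsilon|t|^3+2t^2\,\mathbb{E}\Bigl\{\tfrac{1}{B_{\nu_n}}\textstyle\sum_{j=1}^{\nu_n}\int_{|x|>\epsilon B_{\nu_n}}|x|\,|F_j-\Phi_j|\,dx\Bigr\},
\]
which the paper simply quotes from \cite{Shiryaev1996}, p.~339. You do allude to the quadratic subtraction in your ``main obstacle'' paragraph, but the earlier display and the stated inner bound are inconsistent with it; this is the one place the write-up needs repair.

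For Part~(ii) you take a genuinely different, and equally valid, route: you chain Theorem~\ref{the:2.1}(ii) (random Feller plus random CLT $\Rightarrow$ random Lindeberg) with Proposition~\ref{pro:2.4} (random Lindeberg $\Rightarrow$ random Rotar). The paper instead proves (\ref{equ:2.7}) directly by averaging over $\nu_n$ the pointwise Formanov inequality \cite{Formanov2019}
\[
\tfrac{1}{B_n}\textstyle\sum_{j=1}^n\int_{|x|>\epsilon B_n}|x|\,|F_j-\Phi_j|\,dx\le C(\epsilon)\bigl(\Delta_n+\tfrac{1}{B_n^2}\max_{1\le j\le n}\sigma_j^2\bigr).
\]
Your detour is tidier in that it recycles results already established in the paper; the paper's route is more self-contained at the level of the Rotar condition but imports an external quantitative estimate.
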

\begin{proof}
 ({i}).\quad Following the \cite{Shiryaev1996} (see inequality (6) on page 339), we have
 \begin{equation}\label{equ:3.1}
\bigg|\mathbb{E}(e^{itS_{\nu_{n}}/B_{\nu_{n}}})-e^{-t^{2}/2}\bigg|\leq \epsilon |t|^{3}+2t^{2}\mathbb{E}\bigg\{\frac{1}{B_{\nu_{n}}}\sum\limits_{j=1}^{\nu_{n}}\int\limits_{|x|>\epsilon B_{\nu_{n}}}|x|\cdot |F_{j}(x)-\Phi_{j}(x)|dx\bigg\},
\end{equation}
for every $\epsilon>0$ and $t\in\mathbb{R}.$
Applying the random Rotar condition  (\ref{equ:2.7}), by virtue of arbitrariness of $\epsilon,$ from above inequality  (\ref{equ:3.1}), it may be concluded that
\[
\lim\limits_{n\to\infty}\bigg|\mathbb{E}(e^{itS_{\nu_{n}}/B_{\nu_{n}}})-e^{-t^{2}/2}\bigg|=0.
\] 
According to the continuity theorem of characteristic functions (\cite{Gut2005}, Theorem 1.2, page 160), the above  limit expression shows that the (\ref{equ:2.2}) takes place for a sequence $(X_{j}, j\geq 1),$ i.e.,
\[
\lim\limits_{n\to\infty}\Delta_{\nu_{n}}=0.
\]
Thus, the first assertion ({i}) of the theorem is proved.\\
({ii}).\quad Using the fact that $B^{2}_{n}=\sum\limits_{j=1}^{n}\sigma^{2}_{j},$ it follows that
\[
\frac{1}{B^{4}_{n}}\sum\limits_{j=1}^{n}\sigma^{4}_{j}\leq \frac{1}{B^{2}_{n}}\times \sum\limits_{j=1}^{n}\sigma^{2}_{j}\times \frac{1}{B^{2}_{n}}\max\limits_{1\leq j\leq n}\sigma^{2}=
\frac{1}{B^{2}_{n}}\max\limits_{1\leq j\leq n}\sigma^{2}_{j}.
\]
Then, from  \cite{Formanov2019} (Theorem 1, point 3  on page 33),  with the "series form" $X_{n, j}=\frac{X_{j}}{B_{n}},$ for $n\geq 1, j=1, 2, \cdots, n$ and for any $\epsilon>0,$ we obtain
 \begin{equation}\label{equ:3.2}
 \begin{split}
&\frac{1}{B_{n}}\sum\limits_{j=1}^{n}\int\limits_{|x|>\epsilon B_{n}}|x|\cdot|F_{j}(x)-\Phi_{j}(x)|dx
\leq C(\epsilon)\bigg(\Delta_{n}+\frac{1}{B^{4}_{n}}\sum\limits_{j=1}^{n}\sigma^{4}_{j}\bigg)\\
&\leq C(\epsilon)\bigg(\Delta_{n}+\frac{1}{B^{2}_{n}}\max\limits_{1\leq j\leq n}\sigma^{2}_{j} \bigg),
\end{split}
\end{equation}
where $C(\epsilon)>0.$ \\
According to \cite{Gut2005}, from the above inequality (\ref{equ:3.2}), it follows that
 \begin{equation}\label{equ:3.3}
\mathbb{E}\bigg\{\frac{1}{B_{\nu_{n}}}\sum\limits_{j=1}^{\nu_{n}}\int\limits_{|x|>\epsilon B_{\nu_{n}}}|x|\cdot |F_{j}(x)-\Phi_{j}(x)|dx\bigg\}\leq C(\epsilon)\bigg[\Delta_{\nu_{n}}+\mathbb{E}\bigg(\frac{1}{B^{2}_{\nu_{n}}}\max\limits_{1\leq j\leq \nu_{n}}\sigma^{2}_{j}\bigg) \bigg],
\end{equation}
where $C(\epsilon)>0,$ for any $\epsilon>0.$ \\
According to (\ref{equ:3.3}), the random Rota condition (\ref{equ:2.7})  is satisfied if the (\ref{equ:2.2})  holds (i.e., $\Delta_{\nu_{n}}=o(1)$ when $n\to\infty$) under the random Feller condition (\ref{equ:2.5}). Therefore,  the following  implication 
\[
 (\ref{equ:2.5})\quad\&\quad (\ref{equ:2.2})\Longrightarrow (\ref{equ:2.8})
\]
is valid. The proof is complete.
\end{proof}
\begin{remark}\label{rem:3.1}\quad Theorem \ref{the:3.1} is an extension of Theorem \ref{the:1.4}, since in the case $P(\nu_{n}=1)=1,$ the first theorem is deduced by the second.
\end{remark}
\begin{remark}\label{rem:3.2}\quad It is clear that  the random Rotar CLT \ref{the:3.1} generalizes the random Lindeberg-Feller CLT \ref{the:2.1} since the random Rotar condition (\ref{equ:2.7}) is weaker than the Lindeberg condition (\ref{equ:2.3}).
\end{remark}
The following results devote the order of approximation in the Theorem \ref{the:3.1}  in terms of  "small-o" and "large-$\mathcal{O}$" estimates. Following \cite{Lindeberg1922} and according to so-called Lindeberg-Trotter method \cite{Trotter1959}, it follows that a sufficient condition for the validity of (\ref{equ:2.2}) is 
\[
\lim\limits_{n\to\infty}\bigg|\mathbb{E}f\bigg(\frac{S_{\nu_{n}}}{B_{\nu_{n}}}\bigg)- \mathbb{E}f(X^{*})\bigg| =0,
\]
for each  $f\in C^{2}_{B}(\mathbb{R})=\bigg\{f\in C_{B}(\mathbb{R}): f^{(j)})\in C_{B}(\mathbb{R}), j=1, 2\bigg\},$ where $C_{B}(\mathbb{R})$ denotes the class of bounded, uniformly continuous functions defined on the real axis $\mathbb{R}$ with a norm $||f||=\sup\limits_{x\in\mathbb{R}}|f(x)|.$ 

To establish the order of approximation in random Rotar CLT (\ref{the:3.1}), we need the concept of the modulus of continuity of function $f.$  Let $f\in C_{B}(\mathbb{R}.$ We denote $\omega(f; \epsilon)$ the modulus of continuity of $f,$ such that
\[
\omega(f; \epsilon)=\sup\limits_{|x-y|<\epsilon}|f(x)-f(y)|,
\]
for any $\epsilon>0.$ It is easy to check that
\begin{enumerate}
\item[{i.}] $\omega(f; \epsilon)$ is a  monotonely  decreasing function of $\epsilon$ with $\lim\limits_{\epsilon\to 0+}\omega(f; \epsilon)= 0,$  and
\item[{ii.}] For each $\lambda>0,\quad \omega(f; \lambda\epsilon)\leq (1+\lambda)\omega(f; \epsilon).$
\end{enumerate}
Furthermore, a function $f\in C_{B}(\mathbb{R})$ is said to satisfy a Lipschitz condition of order $\alpha, 0<\alpha\leq 1,$ in symbols $f\in Lip (\alpha, K)$ if $\omega(f; \epsilon)=K\epsilon^{\alpha},$ here $K\geq 0$ denotes a positive constant.
 \begin{theorem}\label{the:3.2}("Large-$\mathcal{O}$" approximation estimates)\quad Under the given assumptions for the sequences $(X_{j}, j\geq 1), (X^{*}_{j}, j\geq 1),$  and $(\nu_{n}, n\geq 1)$ for $j=1, 2, \cdots; n\geq 1,$  let us assume that 
\begin{equation}\label{equ:3.4}
  \sum\limits_{j=1}^{n}\bigg[\mathbb{E}|X_{j}|+\mathbb{E}|X^{*}_{j}|+2\sigma^{2}_{j}\bigg]\leq M_{1}<+\infty,
  \end{equation}
  for a  positive constant $M_{1}>0.$  \\
    Then, for any $f\in C^{1}_{B}(\mathbb{R})$
  \begin{equation}\label{equ:3.5}
 \bigg|\mathbb{E}f\bigg(\frac{S_{\nu_{n}}}{B_{\nu_{n}}}\bigg)- \mathbb{E}f(X^{*})\bigg| \leq M_{1} \mathbb{E}\bigg\{\frac{1}{B_{\nu_{n}}}\omega(f^{\prime}; B^{-1}_{\nu_{n}})  \bigg\}.
 \end{equation}
 If  in addition to the hypotheses $f\in Lip (\alpha, K)$ for $\alpha\in (0, 1]$ and $K\geq 0,$ then
\[
 \bigg|\mathbb{E}f\bigg(\frac{S_{\nu_{n}}}{B_{\nu_{n}}}\bigg) - \mathbb{E}f(X^{*})\bigg| =\mathcal{O}\bigg\{\mathbb{E}\bigg(\frac{1}{B^{1+\alpha}_{\nu_{n}}}\bigg) \bigg\}
 \] 
 as $n\to\infty.$
 \end{theorem}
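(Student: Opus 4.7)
The plan is a Lindeberg--Trotter (telescoping) argument combined with Lemma \ref{lem:2.1} to handle the randomization. By Lemma \ref{lem:2.1}, $X^{*} \stackrel{D}{=} \frac{1}{B_{\nu_{n}}}\sum_{j=1}^{\nu_{n}} X^{*}_{j}$, so the quantity of interest equals the difference between $\mathbb{E}f(S_{\nu_{n}}/B_{\nu_{n}})$ and $\mathbb{E}f\bigl(\frac{1}{B_{\nu_{n}}}\sum_{j=1}^{\nu_{n}} X^{*}_{j}\bigr)$. Since $\nu_{n}$ is independent of both $(X_{j})$ and $(X^{*}_{j})$, conditioning on $\{\nu_{n}=m\}$ reduces the problem for each fixed $m$ to estimating the deterministic--index difference
\[
D_{m} := \mathbb{E}f(S_{m}/B_{m}) - \mathbb{E}f\Bigl(\tfrac{1}{B_{m}}\sum_{j=1}^{m} X^{*}_{j}\Bigr).
\]

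For fixed $m$ I would set $W_{k} = B_{m}^{-1}(X_{1}+\cdots+X_{k}+X^{*}_{k+1}+\cdots+X^{*}_{m})$ for $k = 0, 1, \ldots, m$, so that $W_{m} = S_{m}/B_{m}$ and $W_{0} \stackrel{D}{=} X^{*}$, and telescope: $D_{m} = \sum_{k=1}^{m}\mathbb{E}[f(W_{k}) - f(W_{k-1})]$. Writing $U_{k} = W_{k} - X_{k}/B_{m} = W_{k-1} - X^{*}_{k}/B_{m}$, which by independence is independent of both $X_{k}$ and $X^{*}_{k}$, a first--order Taylor expansion in integral remainder form yields
\[
f(U_{k}+h) - f(U_{k}) - h\,f'(U_{k}) = \int_{0}^{h}\bigl[f'(U_{k}+s) - f'(U_{k})\bigr]\,ds,
\]
whose absolute value is bounded by $|h|\,\omega(f'; |h|)$. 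Because $\mathbb{E}X_{k} = \mathbb{E}X^{*}_{k} = 0$ and $U_{k}$ is independent of the two, the linear terms vanish in expectation, leaving only the Taylor remainders.

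The crucial step is to replace $\omega(f'; |X_{k}|/B_{m})$ by an expression involving $\omega(f'; 1/B_{m})$. Applying property (ii) of the modulus of continuity, $\omega(f'; \lambda\epsilon) \leq (1+\lambda)\omega(f'; \epsilon)$, with $\lambda = |X_{k}|$ and $\epsilon = 1/B_{m}$ gives
\[
\Bigl|f(U_{k}+X_{k}/B_{m}) - f(U_{k}) - (X_{k}/B_{m})f'(U_{k})\Bigr| \leq \frac{|X_{k}| + X_{k}^{2}}{B_{m}}\,\omega(f'; 1/B_{m}),
\]
and the analogous bound for $X^{*}_{k}$. Taking expectations, using $\mathbb{E}X_{k}^{2} = \mathbb{E}(X^{*}_{k})^{2} = \sigma_{k}^{2}$, summing on $k$, and invoking the hypothesis (\ref{equ:3.4}) gives $|D_{m}| \leq M_{1} B_{m}^{-1}\omega(f'; 1/B_{m})$. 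Averaging over $\nu_{n}$ via $\mathbb{E}f(S_{\nu_{n}}/B_{\nu_{n}}) = \sum_{m} P(\nu_{n}=m)\mathbb{E}f(S_{m}/B_{m})$ and pulling the triangle inequality under the sum yields (\ref{equ:3.5}). The $\mathcal{O}$--estimate follows on substituting $\omega(f'; \epsilon) \leq K\epsilon^{\alpha}$, interpreting the Lipschitz hypothesis as $f' \in Lip(\alpha, K)$, which is the only reading consistent with the stated order $\mathbb{E}(B_{\nu_{n}}^{-1-\alpha})$.

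The main obstacle I expect is that the Taylor remainder carries a modulus of continuity evaluated at the potentially large argument $|X_{k}|/B_{m}$; the semi--subadditivity of $\omega$ is exactly what rescales this into $\omega(f'; 1/B_{m})$ at the cost of a linear factor in $|X_{k}|$, which in turn produces the second--moment term $\sigma_{k}^{2}$. This is precisely why the hypothesis (\ref{equ:3.4}) couples first absolute moments with variances, and why the final bound factorizes so cleanly under randomization by $\nu_{n}$.
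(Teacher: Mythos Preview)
Your proposal is correct and follows essentially the same approach as the paper: invoke Lemma~\ref{lem:2.1}, condition on $\nu_{n}$, apply a Lindeberg--Trotter telescoping for each fixed index, Taylor expand to first order, use the subadditivity $\omega(f';\lambda\epsilon)\le(1+\lambda)\omega(f';\epsilon)$ to convert $\omega(f';|X_{k}|/B_{m})$ into $(1+|X_{k}|)\,\omega(f';B_{m}^{-1})$, sum using hypothesis~(\ref{equ:3.4}), and finally average over the distribution of $\nu_{n}$. The only cosmetic difference is that you spell out the telescoping with the full hybrid sums $W_{k}$, whereas the paper writes the Trotter bound directly as $\sum_{j}\bigl|\mathbb{E}f(X_{j}/B_{n})-\mathbb{E}f(X^{*}_{j}/B_{n})\bigr|$ and then estimates each summand by comparing to $f(0)$; both routes yield the same per-term bound $B_{n}^{-1}\omega(f';B_{n}^{-1})\bigl[\mathbb{E}|X_{j}|+\mathbb{E}|X^{*}_{j}|+2\sigma_{j}^{2}\bigr]$.
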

\begin{proof}
Following Lemma \ref{lem:2.1} and according to \cite{Gut2005}, for every $f\in C^{1}_{B}(\mathbb{R}),$ we have
\begin{equation}\label{equ:3.6}
\begin{split}
&\bigg|\mathbb{E}f\bigg(\frac{S_{\nu_{n}}}{B_{\nu_{n}}}\bigg)- \mathbb{E}f(X^{*})\bigg| =
\sum\limits_{n=1}^{\nu_{n}}P(\nu_{n}=n)\bigg|\mathbb{E}f\bigg(\frac{S_{n}}{B_{n}}\bigg)- \mathbb{E}f(X^{*})\bigg| \\
&\leq \sum\limits_{n=1}^{\nu_{n}}P(\nu_{n}=n)\sum\limits_{j=1}^{n}\bigg|\mathbb{E}f\bigg(\frac{X_{j}}{B_{n}}\bigg)- f\bigg(\frac{X^{*}_{j}}{B_{n}}\bigg)\bigg|,
\end{split}
\end{equation}
 Since $f\in C^{1}_{B}(\mathbb{R}),$ one has by the Taylor series expansion
\[
f\bigg(\frac{X_{j}}{B_{n}}\bigg)=f(0)+\frac{f^{\prime}(0)}{B_{n}}X_{j}+\frac{1}{B_{n}}\bigg[f^{\prime}(\eta_{1})-f^{\prime}(0)\bigg]X_{j},
\] 
where $|\eta_{1}|\leq B^{-1}_{n}|x|.$ Applying the expectation to $f,$ since $\mathbb{E}X_{j}=0$ for $j=1, 2, \cdots$ and using the properties of the modulus of function $f,$ this yields
 \begin{equation}\label{equ:3.7}
\begin{split}
&\bigg|\mathbb{E}f\bigg(\frac{X_{j}}{B_{n}}\bigg)-f(0)\bigg|\leq \frac{1}{B_{n}}\int\limits_{\mathbb{R}}\bigg|f^{\prime}(\eta_{1})-f^{\prime}(0) \bigg| |x| dF_{j}(x)\\
&\leq  \frac{1}{B_{n}}\int\limits_{\mathbb{R}}\omega (f^{\prime}; B^{-1}_{n}|x|) |x| dF_{j}(x)\leq  \frac{1}{B_{n}}\omega (f^{\prime}; B^{-1}_{n})\int\limits_{\mathbb{R}}|x|(1+|x|) dF_{j}(x)\\
&\leq  \frac{1}{B_{n}}\omega (f^{\prime}; B^{-1}_{n})\bigg[\int\limits_{\mathbb{R}}|x|dF_{j}(x)+\int\limits_{\mathbb{R}}x^{2}dF_{j}(x)\bigg]\leq  \frac{1}{B_{n}}\omega (f^{\prime}; B^{-1}_{n})\bigg[\mathbb{E}|X_{j}|+\sigma^{2}_{j}\bigg].
\end{split}
\end{equation}
Analogously, since $f\in C^{1}_{B}(\mathbb{R})$ and $X^{*}_{1}, X^{*}_{2}, \cdots$ are independent normal distributed random variables with zero means and positive finite variances $\sigma^{2}_{j}\in (0, +\infty),$ we conclude  that
\begin{equation}\label{equ:3.8}
\begin{split}
&\bigg|\mathbb{E}f\bigg(\frac{X^{*}_{j}}{B_{n}}\bigg)-f(0)\bigg|\leq \frac{1}{B_{n}}\int\limits_{\mathbb{R}}\bigg|f^{\prime}(\eta_{2})-f^{\prime}(0) \bigg| |x| d\Phi_{j}(x)\\
&\leq  \frac{1}{B_{n}}\int\limits_{\mathbb{R}}\omega (f^{\prime}; B^{-1}_{n}|x|) |x| d\Phi_{j}(x)\leq  \frac{1}{B_{n}}\omega (f^{\prime}; B^{-1}_{n})\int\limits_{\mathbb{R}}|x|(1+|x|) d\Phi_{j}(x)\\
&\leq  \frac{1}{B_{n}}\omega (f^{\prime}; B^{-1}_{n})\bigg[\int\limits_{\mathbb{R}}|x|d\Phi_{j}(x)+\int\limits_{\mathbb{R}}x^{2}d\Phi_{j}(x)\bigg]\leq  \frac{1}{B_{n}}\omega (f^{\prime}; B^{-1}_{n})\bigg[\mathbb{E}|X^{*}_{j}|+\sigma^{2}_{j}\bigg].
\end{split}
\end{equation}
where $|\eta_{2}|\leq B^{-1}_{n}|x|.$\\
Combining the estimates (\ref{equ:3.7}) and (\ref{equ:3.8}), from (\ref{equ:3.4}) it may be concluded that
\begin{equation}\label{equ:3.9}
\begin{split}
\sum\limits_{j=1}^{n}\bigg|\mathbb{E}f\bigg(\frac{X_{j}}{B_{n}}\bigg)-\mathbb{E}f\bigg(\frac{X^{*}_{j}}{B_{n}}\bigg) \bigg|&\leq  \frac{1}{B_{n}}\omega (f^{\prime}; B^{-1}_{n})\sum\limits_{j=1}^{n}\bigg[\mathbb{E}|X_{j}|+\mathbb{E}|X^{*}_{j}|+\sigma^{2}_{j}\bigg]\\
&\leq  \frac{M_{1}}{B_{n}}\omega (f^{\prime}; B^{-1}_{n}).
\end{split}
\end{equation}
Applying  (\ref{equ:3.6}), we can assert that
\begin{equation}\label{equ:3.10}
\begin{split}
\bigg|\mathbb{E}f\bigg(\frac{S_{\nu_{n}}}{B_{\nu_{n}}}\bigg)- \mathbb{E}f(X^{*})\bigg|&\leq \sum\limits_{n=1}^{\nu_{n}}P(\nu_{n}=n)\sum\limits_{j=1}^{n}\bigg|\mathbb{E}f\bigg(\frac{X_{j}}{B_{n}}\bigg)- f\bigg(\frac{X^{*}_{j}}{B_{n}}\bigg)\bigg|\\
&\leq M_{1} \mathbb{E}\bigg\{\frac{1}{B_{\nu_{n}}}\omega(f^{\prime}; B^{-1}_{\nu_{n}})  \bigg\}. 
\end{split}
\end{equation}
 Finally,  if  in addition to the hypotheses $f\in Lip (\alpha, K), \alpha\in (0, 1],$ that is, 
 \[
 \omega(f^{\prime}; B^{-1}_{n})\leq KB^{-\alpha}_{n}\quad\text{for}\quad \alpha\in (0, 1]\quad\text{and}\quad K\geq 0.
 \]
 From (\ref{equ:3.10}),  for $\alpha\in (0, 1],$ we conclude that
\[
 \bigg|\mathbb{E}f\bigg(\frac{S_{\nu_{n}}}{B_{\nu_{n}}}\bigg) - \mathbb{E}f(X^{*})\bigg| =\mathcal{O}\bigg\{\mathbb{E}\bigg(\frac{1}{B^{1+\alpha}_{\nu_{n}}}\bigg) \bigg\}\quad\text{as}\quad n\to\infty.
 \] 
The proof is complete.
\end{proof}
 \begin{remark}\label{rem:3.3}\quad When $P(\nu_{n}=n)=1,$ from Theorem \ref{the:3.2},  we have the order of approximation estimate for the non-random Rotar's theorem \ref{the:1.4} as follows:
 \begin{equation*}
  \bigg|\mathbb{E}f\bigg(\frac{S_{n}}{B_{n}}\bigg)- \mathbb{E}f(X^{*})\bigg| \leq  \frac{M_{2}}{B_{n}}\omega(f^{\prime}; B^{-1}_{n})
 \end{equation*}
 for any $f\in C^{1}_{B}(\mathbb{R}).$\\
 Moreover,  if  $f\in Lip (\alpha, K), \alpha\in (0, 1],$ then
\[
 \bigg|\mathbb{E}f\bigg(\frac{S_{n}}{B_{n}}\bigg) - \mathbb{E}f(X^{*})\bigg| =\mathcal{O}\bigg\{\frac{1}{B^{1+\alpha}_{n}}\bigg\}
 \] 
 as $n\to\infty.$
 \end{remark}
  \begin{remark}\label{rem:3.4}\quad An estimate of order of approximation in the Rotar CLT for non-random sums of independent Bernoulli random variables is established by Formanov \cite{Formanov2002} as follows:
  \[
  \sup\limits_{t\leq T}\sum\limits_{j=1}^{n}|\Delta(g_{j}(t))|=\mathcal{O}\bigg(\frac{1}{B_{n}} \bigg)\quad\text{as}\quad n\to\infty,
  \]
  where $g_{j}(t)=\mathbb{E}e^{iY_{j}t}$ denotes the characteristic function of a Bernoulli distributed random variable  
  \[
  Y_{j}=
  \begin{cases}
  1,&\quad\text{with probability}\quad p_{j},\\
  0, &\quad\text{with probability}\quad 1-p_{j}, 
  \end{cases}
  \]
where $p_{j}\in (0,1), j=1, 2, \cdots$  and $\Delta(g_{j}(t))$ stands for the Stein-Tikhomirov operator connected with the characteristic function $g_{j}(t)$ (see for instance \cite{Formanov2002}, page 609). \\
  It is worth noting that,  for $T>0,$ the condition  
  \[
  \lim\limits_{n\longrightarrow\infty}\sup\limits_{t\leq T}\sum\limits_{j=1}^{n}|\Delta(g_{j}(t))|= 0
  \]
   is sufficient (and necessary) for validity of the Rotar CLT (see \cite{Formanov2002}, Theorem on page 607, for more details).
 \end{remark}
 \begin{theorem}\label{the:3.3}("Small-o" approximation estimates)\quad Under the given assumptions for the sequences $(X_{j}, j\geq 1), (X^{*}_{j}, j\geq 1),$  and $(\nu_{n}, n\geq 1)$ for $j=1, 2, \cdots; n\geq 1,$ let us suppose that  
 \begin{equation}\label{equ:3.11}
  \sum\limits_{j=1}^{n}\bigg[\mathbb{E}|X_{j}|+\mathbb{E}|X^{*}_{j}|\bigg]\leq M_{2}<+\infty,
  \end{equation}
 for a positive constant  $M_{2}>0.$  Furthermore,  assume that the random Rotar condition (\ref{equ:2.8}) is valid for every $\epsilon > 0.$ Then, for any $f\in C^{1}_{B}(\mathbb{R})$
 \[
 \bigg|\mathbb{E}f\bigg(\frac{S_{\nu_{n}}}{B_{\nu_{n}}}\bigg) - \mathbb{E}f(X^{*})\bigg| =o\bigg\{\mathbb{E}\bigg(\frac{1}{B_{\nu_{n}}}\bigg)\bigg\}\quad\text{as}\quad n\to\infty.
 \] 
 \end{theorem}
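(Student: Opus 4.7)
The plan is to follow the Lindeberg--Trotter skeleton used in the proof of Theorem \ref{the:3.2} (Lemma \ref{lem:2.1} delivers $X^{*}\stackrel{D}{=}\frac{1}{B_{\nu_{n}}}\sum_{j=1}^{\nu_{n}}X_{j}^{*}$, and conditioning on $\nu_{n}$ reduces matters to a deterministic bound at each fixed index), but to replace the raw modulus/Taylor estimate by an integration by parts identity that exposes the difference $F_{j}(y)-\Phi_{j}(y)$, so that the random Rotar functional from Definition \ref{def:2.4} can be brought in.

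Concretely, integrating by parts (the boundary terms vanish because $f\in C_{B}^{1}(\mathbb{R})$ and $F_{j}-\Phi_{j}\to 0$ at $\pm\infty$) and subtracting the constant $f'(0)\int(F_{j}-\Phi_{j})\,dy=0$, which vanishes because $\mathbb{E}X_{j}=\mathbb{E}X_{j}^{*}=0$, yields the identity
\[
\mathbb{E}f\Bigl(\tfrac{X_{j}}{B_{n}}\Bigr)-\mathbb{E}f\Bigl(\tfrac{X_{j}^{*}}{B_{n}}\Bigr)=-\frac{1}{B_{n}}\int_{\mathbb{R}}\bigl[f'(y/B_{n})-f'(0)\bigr]\bigl[F_{j}(y)-\Phi_{j}(y)\bigr]\,dy.
\]
Splitting the integral at $|y|=\epsilon B_{n}$, on the inner piece I would bound $|f'(y/B_{n})-f'(0)|\leq \omega(f';\epsilon)$ together with the standard estimate $\int|F_{j}-\Phi_{j}|\,dy\leq \mathbb{E}|X_{j}|+\mathbb{E}|X_{j}^{*}|$; on the outer piece I would use $|f'(\cdot)-f'(0)|\leq 2\|f'\|$ and the elementary inequality $1\leq |y|/(\epsilon B_{n})$ to manufacture the Rotar weight $|y|$. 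Summing over $j$ (with the Lindeberg--Trotter hybrid sums treated as in (\ref{equ:3.6})), invoking the hypothesis (\ref{equ:3.11}), and averaging over $\nu_{n}$ then produces the master bound
\[
\Bigl|\mathbb{E}f\bigl(S_{\nu_{n}}/B_{\nu_{n}}\bigr)-\mathbb{E}f(X^{*})\Bigr|\leq M_{2}\,\omega(f';\epsilon)\,\mathbb{E}\Bigl(\tfrac{1}{B_{\nu_{n}}}\Bigr)+\tfrac{2\|f'\|}{\epsilon}\,\mathbb{E}\Bigl(\tfrac{L_{\nu_{n}}(\epsilon)}{B_{\nu_{n}}}\Bigr),
\]
where $L_{\nu_{n}}(\epsilon)$ is the random Rotar functional sitting inside the expectation in (\ref{equ:2.7}).

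To extract the $o(\mathbb{E}(1/B_{\nu_{n}}))$ conclusion, I would divide through by $\mathbb{E}(1/B_{\nu_{n}})$ and send $n\to\infty$ first for fixed $\epsilon$ and then $\epsilon\to 0$, stitching the two limits with a diagonal sequence $\epsilon_{n}\downarrow 0$. The first term $M_{2}\,\omega(f';\epsilon)$ is absorbed by continuity of the modulus at the origin; for the second term one combines the random Rotar hypothesis (\ref{equ:2.7}), which gives $\mathbb{E}L_{\nu_{n}}(\epsilon)\to 0$ for each $\epsilon>0$, with the uniform bound $L_{n}(\epsilon)\leq 2$ visible from (\ref{equ:2.8}), and runs a truncation on the event $\{L_{\nu_{n}}(\epsilon)>\delta\}$ so as to transfer the smallness of $\mathbb{E}L_{\nu_{n}}(\epsilon)$ to that of $\mathbb{E}[L_{\nu_{n}}(\epsilon)/B_{\nu_{n}}]$ relative to $\mathbb{E}(1/B_{\nu_{n}})$.

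\textbf{Main obstacle.} The construction of the master bound is essentially a careful repackaging of ideas already present in the proof of Theorem \ref{the:3.2} together with Rotar's inequality (\ref{equ:2.8}); the genuinely delicate step is the closure. The random Rotar hypothesis controls $\mathbb{E}L_{\nu_{n}}(\epsilon)$, whereas the master bound carries the \emph{weighted} expectation $\mathbb{E}[L_{\nu_{n}}(\epsilon)/B_{\nu_{n}}]$, so upgrading from the automatic $\mathcal{O}(\mathbb{E}(1/B_{\nu_{n}}))$ (which is immediate from $L_{\nu_{n}}\leq 2$) to the sharp $o(\mathbb{E}(1/B_{\nu_{n}}))$ requires quantitative control of the joint behaviour of $L_{\nu_{n}}(\epsilon)$ and $B_{\nu_{n}}$. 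The assumption $B_{\nu_{n}}\stackrel{P}{\to}\infty$, together with the uniform boundedness of the Rotar functional, is indispensable here in order to balance the truncation parameter $\delta$ against the diagonal choice of $\epsilon_{n}$.
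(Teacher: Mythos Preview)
Your architecture is exactly the paper's: Lemma~\ref{lem:2.1} plus conditioning on $\nu_{n}$ as in (\ref{equ:3.6}), integration by parts to surface $F_{j}-\Phi_{j}$, a split at $|x|=\epsilon B_{n}$, the moment hypothesis (\ref{equ:3.11}) on the inner piece, and the Rotar integral on the outer piece. The packaging differs only mildly: the paper subtracts both a linear and a quadratic correction (using $\int x^{k}\,dF_{j}=\int x^{k}\,d\Phi_{j}$ for $k=1,2$) before integrating by parts, which is meant to deliver the weight $|x|$ in (\ref{equ:3.12}) directly rather than via your factor $|y|/(\epsilon B_{n})$, and on the inner region the paper uses $\|f'\|$ where you use $\omega(f';\epsilon)$.

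The substantive divergence is the closure, and here the paper does \emph{not} go through your weighted expectation $\mathbb{E}\bigl[L_{\nu_{n}}(\epsilon)/B_{\nu_{n}}\bigr]$. At the deterministic stage it multiplies (\ref{equ:3.13}) by $B_{n}$ to obtain (\ref{equ:3.14}), an inequality of the shape $B_{n}\sum_{j}|\cdots|\le \epsilon+L_{n}(\epsilon)$ up to constants, and then averages over $\nu_{n}$ to reach (\ref{equ:3.15}) with the \emph{unweighted} $\mathbb{E}L_{\nu_{n}}(\epsilon)$ on the right---precisely what the random Rotar hypothesis (\ref{equ:2.7}) controls, so no truncation or diagonal argument is needed. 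Your obstacle arises because you keep the factor $1/B_{n}$ inside and only then average; following the paper's order of operations (multiply by $B_{n}$ first, average afterwards) sidesteps it. That said, your caution is not misplaced: the left-hand side of (\ref{equ:3.15}) as written carries the random factor $B_{\nu_{n}}$ multiplying the deterministic number $|\mathbb{E}f(S_{\nu_{n}}/B_{\nu_{n}})-\mathbb{E}f(X^{*})|$, and the step from the averaged version of (\ref{equ:3.14}) to the conclusion $o\bigl(\mathbb{E}(1/B_{\nu_{n}})\bigr)$ is not spelled out rigorously in the paper either; a careful rewriting of that step in fact leads back to exactly the weighted quantity you identified.
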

 \begin{proof}
For $f\in C^{1}_{B}(\mathbb{R}),$  we have
 \begin{equation*}
 \begin{split}
  &\mathbb{E}f\bigg(\frac{X_{j}}{B_{n}}\bigg) - \mathbb{E}f\bigg(\frac{X^{*}_{j}}{B_{n}}\bigg)=\int\limits_{-\infty}^{+\infty}f\bigg(\frac{x}{B_{n}}\bigg)d\bigg(F_{j}(x)-\Phi_{j}(x)\bigg)\\ 
  &=\int\limits_{-\infty}^{+\infty}\bigg[f\bigg(\frac{x}{B_{n}}\bigg)-\frac{f^{\prime}(0) x}{B_{n}}+\frac{f^{\prime}(0) x^{2}}{B^{2}_{n}} \bigg] d\bigg(F_{j}(x)-\Phi_{j}(x)\bigg),
 \end{split}
\end{equation*}
where we have used the fact that
    \[
    \int\limits_{-\infty}^{+\infty}x^{j}dF_{j}(x)=\int\limits_{-\infty}^{+\infty}x^{j}d\Phi_{j}(x)\quad\text{for}\quad j=1, 2.
    \]
   Applying the formula for integration by part to the integral
    \[
    \int\limits_{-\infty}^{+\infty}\bigg[f\bigg(\frac{X_{j}}{B_{\nu_{n}}}\bigg)-\frac{f^{\prime}(0) x}{B_{n}}+\frac{f^{\prime}(0) x^{2}}{B^{2}_{n}} \bigg] d\bigg(F_{j}(x)-\Phi_{j}(x)\bigg),
    \]
with the fact that  
\[
\lim\limits_{x\to\infty}x^{2}[1-F_{j}(x)+F_{j}(-x)]= 0,
\]
 and 
 \[
 \lim\limits_{x\to\infty} x^{2}[1-\Phi_{j}(x)+\Phi_{j}(-x)]=0. 
 \]
 Then, for every $\epsilon>0,$ we obtain
 \begin{equation}\label{equ:3.12}
 \begin{split}
  &\bigg|\mathbb{E}f\bigg(\frac{X_{j}}{B_{\nu_{n}}}\bigg) - \mathbb{E}f\bigg(\frac{X^{*}_{j}}{B_{\nu_{n}}}\bigg)\bigg|=\bigg|\int\limits_{-\infty}^{+\infty}\bigg[f\bigg(\frac{X_{j}}{B_{\nu_{n}}}\bigg)-\frac{f^{\prime}(0) x}{B_{n}}+\frac{f^{\prime}(0) x^{2}}{B^{2}_{n}} \bigg] d\bigg(F_{j}(x)-\Phi_{j}(x)\bigg)\bigg|\\ 
  &=\bigg|\int\limits_{-\infty}^{+\infty}\bigg[\frac{f^{\prime}(\frac{x}{B_{n}})}{B_{n}}-\frac{f^{\prime}(0)}{B_{n}}+\frac{f^{\prime}(0) x}{B^{2}_{n}} \bigg] \cdot \bigg(F_{j}(x)-\Phi_{j}(x)\bigg)dx\bigg|\\
  &\leq \frac{||f^{\prime}||}{B^{2}_{n}} \int\limits_{|x|\leq \epsilon B_{n}}|x|\cdot |F_{j}(x)-\Phi_{j}(x)|dx+\frac{||f^{\prime}||}{B^{2}_{n}} \int\limits_{|x|> \epsilon B_{n}}|x|\cdot |F_{j}(x)-\Phi_{j}(x)|dx \\
  &\leq \frac{||f^{\prime}||}{B_{n}}\epsilon\bigg[\mathbb{E}|X_{j}|+\mathbb{E}|X^{*}_{j}|\bigg]+\frac{||f^{\prime}||}{B^{2}_{n}} \int\limits_{|x|> \epsilon B_{n}}|x|\cdot |F_{j}(x)-\Phi_{j}(x)|dx.
 \end{split}
\end{equation}   
Summing over the j's both sides of the (\ref{equ:3.12}), on account of  (\ref{equ:3.11}), one has
    \begin{equation}\label{equ:3.13}
    \begin{split}
   & \sum\limits_{j=1}^{n}\bigg|\mathbb{E}f\bigg(\frac{X_{j}}{B_{n}}\bigg) - \mathbb{E}f\bigg(\frac{X^{*}_{j}}{B_{n}}\bigg)\bigg|\leq 
    \frac{||f^{\prime}||}{B_{n}}\epsilon\sum\limits_{j=1}^{n}\bigg[\mathbb{E}|X_{j}|+\mathbb{E}|X^{*}_{j}|\bigg]\\
    &+\frac{||f^{\prime}||}{B^{2}_{n}}\sum\limits_{j=1}^{n} \int\limits_{|x|> \epsilon B_{n}}|x|\cdot |F_{j}(x)-\Phi_{j}(x)|dx\leq \frac{||f^{\prime}|| M_{2}}{B_{n}}\epsilon+\frac{||f^{\prime}||}{B^{2}_{n}}\sum\limits_{j=1}^{n} \int\limits_{|x|> \epsilon B_{n}}|x|\cdot |F_{j}(x)-\Phi_{j}(x)|dx. 
    \end{split}
    \end{equation}  
Following (\ref{equ:3.11}), there exists a natural $n_{n}\geq 1,$ such that for $n\geq n_{0}$ we have
\[
M_{2}\geq 1.
\] 
Furthermore, for $f\in C^{\prime}_{B}(\mathbb{R}),$ we can choose the function $f$ such that $||f^{\prime}||\geq 1.$  
Then, used the fact that
\[
\bigg(\frac{B_{n}}{||f^{\prime}||M_{2}}\bigg)\leq B_{n}.
\]
Thus, from inequality (\ref{equ:3.13}), we see that
     \begin{equation}\label{equ:3.14}
     \begin{split}
&\sum\limits_{j=1}^{n}\bigg|\mathbb{E}f\bigg(\frac{X_{j}}{B_{n}}\bigg) - \mathbb{E}f\bigg(\frac{X^{*}_{j}}{B_{n}}\bigg)\bigg|\times\bigg(\frac{B_{n}}{||f^{\prime}||M_{2}}\bigg)\leq \sum\limits_{j=1}^{n}\bigg|\mathbb{E}f\bigg(\frac{X_{j}}{B_{n}}\bigg) - \mathbb{E}f\bigg(\frac{X^{*}_{j}}{B_{n}}\bigg)\bigg|\times B_{n}\\
&\leq \epsilon + \frac{1}{B_{n}}\sum\limits_{j=1}^{n} \int\limits_{|x|> \epsilon B_{n}}|x|\cdot |F_{j}(x)-\Phi_{j}(x)|dx. 
\end{split}
    \end{equation}    
Following \cite{Gut2005}, from (\ref{equ:3.14}), we conclude that  
  \begin{equation}\label{equ:3.15}
    \begin{split}
  & \bigg|\mathbb{E}f\bigg(\frac{S_{\nu_{n}}}{B_{\nu_{n}}}\bigg) - \mathbb{E}f(X^{*})\bigg|\times B_{\nu_{n}}\leq\mathbb{E}\bigg\{ \sum\limits_{j=1}^{\nu_{n}}\bigg|\mathbb{E}f\bigg(\frac{X_{j}}{B_{\nu_{n}}}\bigg) - \mathbb{E}f\bigg(\frac{X^{*}_{j}}{B_{\nu_{n}}}\bigg)\bigg|\times B_{\nu_{n}}\bigg\}\\
 & \leq \epsilon + \mathbb{E}\bigg\{\frac{1}{B_{\nu_{n}}}\sum\limits_{j=1}^{\nu_{n}} \int\limits_{|x|> \epsilon B_{\nu_{n}}}|x|\cdot |F_{j}(x)-\Phi_{j}(x)|dx\bigg\}. 
    \end{split}
    \end{equation}  
Applying the random Rotar condition (\ref{equ:2.7}), by virtue of arbitrariness of  $\epsilon,$  from (\ref{equ:3.15}) we obtain the final estimate as $n\to\infty$
\[
 \bigg|\mathbb{E}f\bigg(\frac{S_{\nu_{n}}}{B_{\nu_{n}}}\bigg) - \mathbb{E}f(X^{*})\bigg| =o\bigg\{\mathbb{E}\bigg(\frac{1}{B_{\nu_{n}}}\bigg)\bigg\}.
 \]  
The proof of theorem is complete. 
    \end{proof} 
\begin{remark}\label{rem:3.5}\quad Theorem  \ref{the:3.3} also reconfirms  that the random Rotar condition (\ref{equ:2.8}) is sufficient condition of the random Rotar CLT.
     \end{remark}
\begin{remark}\label{rem:3.6}\quad If $P(\nu_{n}=n)=1,$ following Theorem \ref{the:3.3}, for any $f\in C^{1}_{B}(\mathbb{R}),$ we obtain
 \[
 \bigg|\mathbb{E}f\bigg(\frac{S_{n}}{B_{n}}\bigg) - \mathbb{E}f(X^{*})\bigg| =o\bigg(\frac{1}{B_{n}}\bigg)\quad\text{as}\quad n\to\infty
 \] 
 for the order of approximation estimation  in  Rotar's theorem \ref{the:1.4}.
 \end{remark}

\noindent
Author's address:\\
Tran Loc Hung\\
Ho Chi Minh City, Vietnam.\\
tlhungvn@gmail.com

\end{document}